\definecolor{darkblue}{rgb}{0,0,0.4}
\newtheorem{thm}{Theorem}[section]         
\newtheorem{lem}[thm]{Lemma}
\newcommand{\R}{\mathbb{R}}
\newcommand{\Z}{\mathbb{Z}}
\newcommand{\F}{\mathbb{F}}
\newcommand{\OO}{\text{O}}
\newcommand{\XX}{\text{X}}
\newcommand{\mc}{\mathcal}
\newcommand{\mf}{\mathfrak}
\newcommand{\wt}{\widetilde}
\newcommand{\del}{\partial}
\newcommand{\sbs}{\subset}
\newcommand{\sbseq}{\subseteq}
\newcommand{\sm}{\setminus}
\newcommand{\al}{\alpha}
\newcommand{\be}{\beta}
\newcommand{\Zoltan}{Zolt\'{a}n}
\newcommand{\Szabo}{Szab\'{o}}
\newcommand{\Ozsvath}{Ozsv\'{a}th}
\newcommand{\ith}{^{\text{th}}}
\newcommand{\Id}{\operatorname{Id}}
\newcommand{\CF}{\mathit{CF}}
\begin{document}

\title{Grid diagrams and the Ozsv\'{a}th-Szab\'{o} tau-invariant}
\author{Sucharit Sarkar}
\address{Department of Mathematics\\Columbia University\\New York, NY 10027}
\email{\href{mailto:sucharit.sarkar@gmail.com}{sucharit.sarkar@gmail.com}}

\subjclass[2010]{\href{http://www.ams.org/mathscinet/search/mscdoc.html?code=57M25}{57M25}}
\keywords{Knot cobordism; \Ozsvath-\Szabo{} invariant; Milnor conjecture; Grid diagram}

\date{}

\begin{abstract}
  We use grid diagrams to investigate the \Ozsvath-\Szabo{} concordance invariant $\tau$, and to prove that $\left|\tau(K_1)-\tau(K_2)\right|\leq g$, whenever there is a genus $g$ knot 
cobordism joining $K_1$ to $K_2$. This leads to an entirely grid diagram-based proof of Kronheimer-Mrowka's theorem, formerly known as the Milnor conjecture.
\end{abstract}

\maketitle

\section{Introduction}

Links inside $S^3$ can be represented by a combinatorial structure called grid diagrams, and these grid diagrams can then be used to study various properties of the links. Grid diagrams first appeared as arc-presentations in \cite{gridHB}, and are also equivalent to the square-bridge positions of \cite{gridHCL}, the Legenedrian realisations of \cite{gridHM}, the asterisk presentations of \cite{gridLN} and the fences of \cite{gridLR}. They have been used to classify essential tori in the complement of non-split links \cite{gridJBWM}, to define certain Legendrian and transverse knot invariants \cite{POZSzDT}, and to describe an algorithm to detect the unknot \cite{gridID}. Many properties of grid diagrams have been studied in great detail in \cite{gridPC}.

Quite recently, it was observed in \cite{CMPOSS} that grid diagrams can also be used to study a family of knot invariants and link invariants called knot Floer homology, originally defined for knots in \cite{POZSzknotinvariants, JR}, and extended for links in \cite{POZSzlinkinvariants}. Knot Floer homology is a powerful knot invariant, which generalises the Alexander polynomial and can detect the knot genus \cite{POZSzgenusbounds}, and fiberedness \cite{YN}.  However, it was originally defined using holomorphic geometry, and it is an interesting endeavor to find combinatorial reinterpretations of various aspects of the theory using grid diagrams.

The aspect of knot Floer homology that we will study here is the \Ozsvath-\Szabo{} knot invariant $\tau$, as defined in \cite{JR, POZSz4ballgenus}. It was shown in \cite{POZSz4ballgenus} that the absolute value of $\tau$ is a lower bound for the four-ball genus, and it can be used to prove a theorem due to Kronheimer and Mrowka \cite{generalPKTM}, formerly known as the Milnor conjecture, that the unknotting number of the torus knot $T(p,q)$ is $\frac{(p-1)(q-1)}{2}$. In this paper, we will study the definition of $\tau$ in terms of grid diagrams, we will compute $\tau$ for torus knots using grid diagrams, and we will give a grid diagram-based proof of the fact that $\left|\tau\right|$ is a lower bound for the four-ball genus which similar in spirit to Rasmussen's proof for the $s$ invariant \cite{generalJR}, thereby giving a new combinatorial proof of the Milnor's conjecture.

\section{Knot cobordisms}

Throughout this paper, the terms \emph{knots} and \emph{links} will mean oriented knots and oriented links inside $S^3$. A \emph{link   cobordism} from a link $L_1$ to a link $L_2$ is a properly embedded oriented surface $S$ inside $S^3\times [0,1]$, such that $\del S\cap(S^3\times\{0\})=-L_1\times\{0\}$ and $\del S\cap(S^3\times\{1\})=L_2\times\{1\}$. A link cobordism joining a knot to another knot is called a \emph{knot   cobordism}. After a small isotopy of the cobordism $S$ inside $S^3\times [0,1]$ relative to the boundary, we can assume that the second projection $p_2\colon S^3\times [0,1]\rightarrow\R$, restricted to $S$, is a Morse function. We will call this Morse function $p_2|_{S}$, the \emph{time function} $t$. The index $0$, index $1$ and index $2$ critical points of the time function are called \emph{births}\phantomsection\label{birth}, \emph{saddles}\phantomsection\label{saddle} and \emph{deaths}\phantomsection\label{death}.  In a saddle, either two link components \emph{merge}\phantomsection\label{merge} to form a single link component, or a link component \emph{splits}\phantomsection\label{split} to form two link components.  A link cobordism $S$ joining $L_1$ to $L_2$ is called a \emph{concordance} if $S$ is homeomorphic to $L_1\times[0,1]$. A concordance where the time function does not have any critical points is called an \emph{isotopy}. Two links are said to be \emph{isoptic} if there is an isotopy joining them. Two links are said to be \emph{concordant} if there is a concordance joining them.

There is a concordance invariant $\tau$ for knots \cite{POZSz4ballgenus}, such that if there is a connected genus $g$ cobordism from $K_1$ to $K_2$, then $\left|\tau(K_1)-\tau(K_2)\right|\leq g$. The \emph{four-ball genus} of a knot $K$ is the smallest integer $g^*(K)$ such that there is a connected genus $g^*(K)$ cobordism from $K$ to the unknot. The \emph{unknotting number} $u(K)$ of a knot $K$ is the smallest number of crossing changes that needs to be done to convert it to the unknot. However, a crossing change is a particular type of a connected genus $1$ cobordism. Therefore, for any knot $K$, we have $u(K)\geq g^*(K)\geq\left|\tau(K)\right|$.

In the subsequent sections, we will start with the definition of
$\tau$ using grid diagrams, and directly prove that the inequality
$\left|\tau(K_1)-\tau(K_2)\right|\leq g$ holds whenever there is a
connected genus $g$ cobordism joining $K_1$ to $K_2$. Representing
torus knots by grid diagrams, we will compute $\tau$ and produce an
explicit unknotting sequence, and thereby give a new and completely
grid diagram-based proof of Milnor's conjecture
$u(T(p,q))=\frac{(p-1)(q-1)}{2}$. As a preparatory move, let us prove
the following lemma about knot cobordisms.

\begin{lem}\label{lem:isotopy}
  If $S$ is a connected knot cobordism from $K_1$ to $K_2$, then $S$
  can be isotoped to a cobordism $S'$ inside $S^3\times[0,1]$ relative
  to the boundary, preserving the number of births, saddles and deaths
  throughout the isotopy, such that for $S'$, all the births happen at
  time $\frac{1}{4}$, all the saddles happen at time $\frac{1}{2}$,
  and all the deaths happen at time $\frac{3}{4}$. Furthermore, we can
  ensure that $S'$ restricted to $S^3\times[0,\frac{1}{4})$ and $S'$
  restricted to $S^3\times(\frac{3}{4},1]$ are both product
  cobordisms.
\end{lem}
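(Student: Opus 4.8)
The plan is to treat the time function $t = p_2|_S$ as a Morse function on the $2$-dimensional cobordism $S$ and to put it into the desired shape by the classical Morse-theoretic rearrangement of critical points, each step realized by an ambient isotopy of $S^3\times[0,1]$. Since $S$ meets $S^3\times\{0,1\}$ transversally in $K_1$ and $K_2$, every critical point of $t$ lies in the interior, and by hypothesis $t$ is already Morse; after a preliminary small isotopy rel boundary I may assume that all critical values are distinct and lie in $(0,1)$. Here births, saddles and deaths are exactly the critical points of index $0$, $1$ and $2$. I would then fix a gradient-like vector field $\xi$ for $t$ on $S$ and, perturbing $\xi$ slightly, arrange that the ascending and descending manifolds of the critical points meet transversally inside the regular level sets.

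The engine of the argument would be a dimension count in the $1$-dimensional regular level sets $t^{-1}(c)$. Slightly below a critical point of index $\lambda$ the descending manifold cuts out a $(\lambda-1)$-sphere, while slightly above a critical point of index $\mu$ the ascending manifold cuts out a $(1-\mu)$-sphere; inside a regular level set (of dimension $1$) two such transverse submanifolds can meet only when their dimensions sum to at least $1$, that is, only when $\lambda > \mu$. Hence a $\xi$-flow line can run from a critical point to a later one only if the index strictly increases along the flow. Consequently, whenever a higher-index critical point sits below a lower-index critical point in time — a saddle below a birth, or a death below a birth or a saddle — no flow line joins them, so I can interchange their times. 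I would perform each interchange by an ambient isotopy of $S^3\times[0,1]$ fixing $S^3\times\{0,1\}$ and supported in disjoint flow boxes around the two (unconnected) trajectory sets; because I only ever slide critical points past one another and never along a connecting flow line, no critical point is created or cancelled, and the number of births, saddles and deaths is preserved throughout. Iterating over all out-of-order pairs brings $S$ to a form in which all births precede all saddles, which in turn precede all deaths.

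Finally, since flow lines never join critical points of equal index, the births are mutually unconstrained, as are the saddles and the deaths; so a further ambient isotopy, again rel boundary and again preserving the counts, would slide all births to time $\frac{1}{4}$, all saddles to time $\frac{1}{2}$, and all deaths to time $\frac{3}{4}$. The regions $t^{-1}[0,\frac{1}{4})$ and $t^{-1}(\frac{3}{4},1]$ then contain no critical points, so flowing along $\xi$ identifies them with $K_1\times[0,\frac{1}{4})$ and $K_2\times(\frac{3}{4},1]$; that is, $S'$ restricted to $S^3\times[0,\frac{1}{4})$ and to $S^3\times(\frac{3}{4},1]$ are product cobordisms, as required. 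I expect the main obstacle to be not the combinatorics of the ordering, which the index inequality $\lambda>\mu$ settles cleanly, but the faithful translation of each abstract rearrangement of the Morse function into an honest ambient isotopy of the embedded surface — one that is supported away from $S^3\times\{0,1\}$ and that manifestly leaves the number of critical points of each type unchanged. Making the supporting flow boxes genuinely disjoint, and the isotopy genuinely rel boundary, is where the care is needed.
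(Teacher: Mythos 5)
Your intrinsic Morse theory is fine: the dimension count in the $1$-dimensional regular level sets is correct, and it does show that, generically, no flow line of $\xi$ \emph{inside $S$} joins an out-of-order pair of critical points, so the abstract Morse function $t$ on the abstract surface $S$ can be rearranged. The gap is in the step you yourself flag and then do not carry out: realizing each rearrangement by an ambient isotopy of $S^3\times[0,1]$ rel boundary. Your proposed mechanism --- an isotopy ``supported in disjoint flow boxes around the two (unconnected) trajectory sets'' --- cannot work as stated, because changing the critical value of, say, a death from $t_0$ to $\frac{3}{4}$ forces part of $S$ (the dying cap) to be dragged a macroscopic distance in the time direction; the support of such an isotopy must contain a whole column of $S^3\times[t_0,\frac{3}{4}]$, not a small neighborhood of the trajectory set, and other sheets of $S$ --- intrinsically far from the critical point, hence invisible to your level-set count --- may pass through that column. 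Disjointness of the intrinsic trajectory sets says nothing about these ambient collisions, and the collisions are a real phenomenon: the naive isotopy that keeps the $S^3$-coordinate and interpolates the time coordinate from $t$ to the rearranged function fails to be injective precisely when two sheets lying over the same point of $S^3$ are brought to the same time.

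What closes this gap --- and what is the actual content of the paper's proof --- is a pair of \emph{embedded}, not intrinsic, general-position arguments. For deaths (and, symmetrically, births), the component about to die is kept alive as a tiny unknot concentrated near a point $x_0\in S^3$; its track is essentially $\{x_0\}\times[t_0,\frac{3}{4}]$, which is $1$-dimensional, while the rest of $S$ is $2$-dimensional in the $4$-dimensional ambient space, so generically the two are disjoint and the unknot can ride untouched to the end. For saddles even this fails (the two strands to be merged keep moving and can entangle with everything else), so the paper replaces each saddle, at the instant it would occur, by an untwisted band attached to the link; the bands are dragged along with the moving link, kept disjoint from the link and from each other, and all saddles are then performed simultaneously at time $\frac{1}{2}$ by cutting the bands open. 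A complete version of your proposal would have to supply arguments of exactly this kind; the rearrangement theorem for the abstract Morse function, which is where all of your actual argument lives, does none of this work.
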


\begin{proof}
  We would like to think of cobordisms as movies with time running
  from $0$ to $1$. The still at time $t$ is $S\cap(S^3\times\{t\})$;
  therefore, all but finitely many of the stills are links in
  $S^3$. We start with some link, and as the movie plays, for most of
  the time, we simply isotope the link. However, at finitely many
  points in time, we can have births, saddles or deaths, as shown in
  \hyperref[fig:morse]{Figure \ref*{fig:morse}}.

\begin{figure}
\begin{center}
\begin{tabular}{ccc}
\epsfig{file=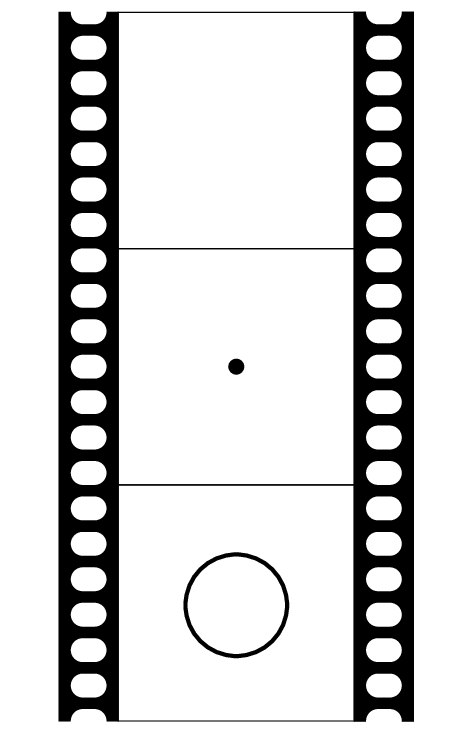,height=0.2\textheight,clip=}&
\epsfig{file=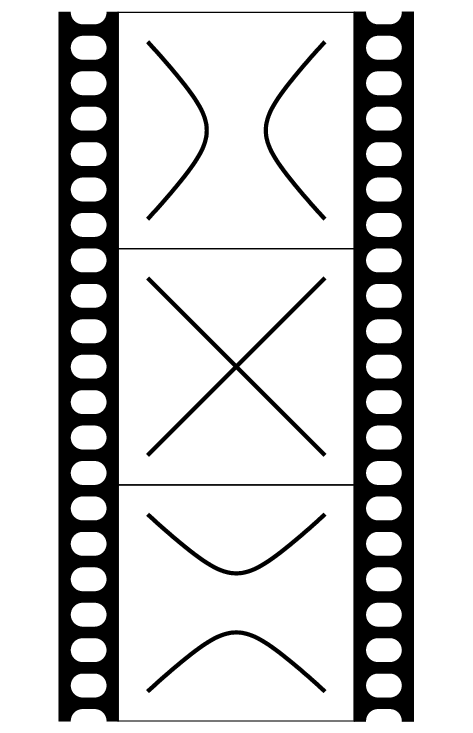,height=0.2\textheight,clip=}&
\epsfig{file=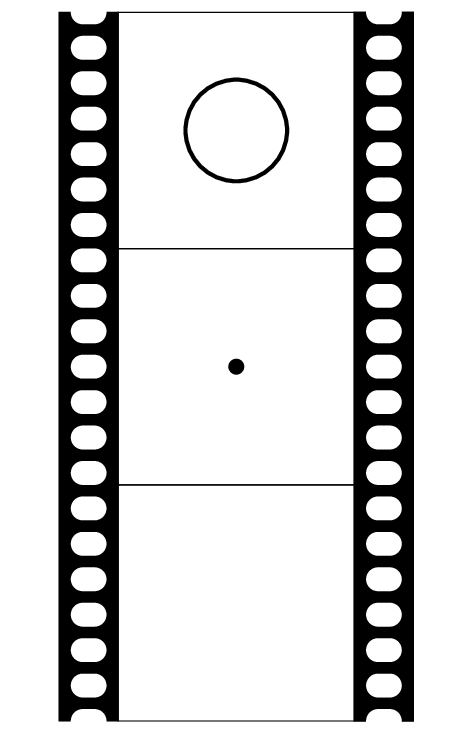,height=0.2\textheight,clip=}
\end{tabular}
\end{center}
\caption{Birth, saddle and death.}\label{fig:morse}
\end{figure}

Clearly, we can isotope $S$, while preserving the number of births,
saddles and deaths, to ensure that all the deaths happen at the very
end. Just before some death is about to happen, intervene, and keep
the relevant unknot component alive. Since the unknot, thus kept
alive, is an unknot supported inside a very small ball, it behaves
like a point, and therefore generically does not interfere with the
rest of the cobordism. Therefore, we can postpone all the deaths,
until all that is left of the cobordism is a product cobordism, and
then have all the deaths. Similarly, we can ensure that all the births
happen at the very beginning. Thus we can assume that the cobordism
$S$ restricted to either $S^3\times [0,\frac{1}{4})$ or
  $S^3\times(\frac{3}{4},1]$ is a product cobordism, and all the
births happen at time $\frac{1}{4}$, and all the deaths happen at time
$\frac{3}{4}$.

Now we want to isotope the cobordism inside
$S^3\times[\frac{1}{4},\frac{3}{4}]$, relative to the boundary, so as
to ensure that all the saddles happen at the same time. By
reparametrizing time if necessary, assume that all the saddles happen
before time $t=\frac{1}{2}$. We will now describe how to postpone all
the saddles until that point, and then make all the saddles happen
simultaneously.

\begin{figure}
\begin{center}
\includegraphics[height=0.2\textheight]{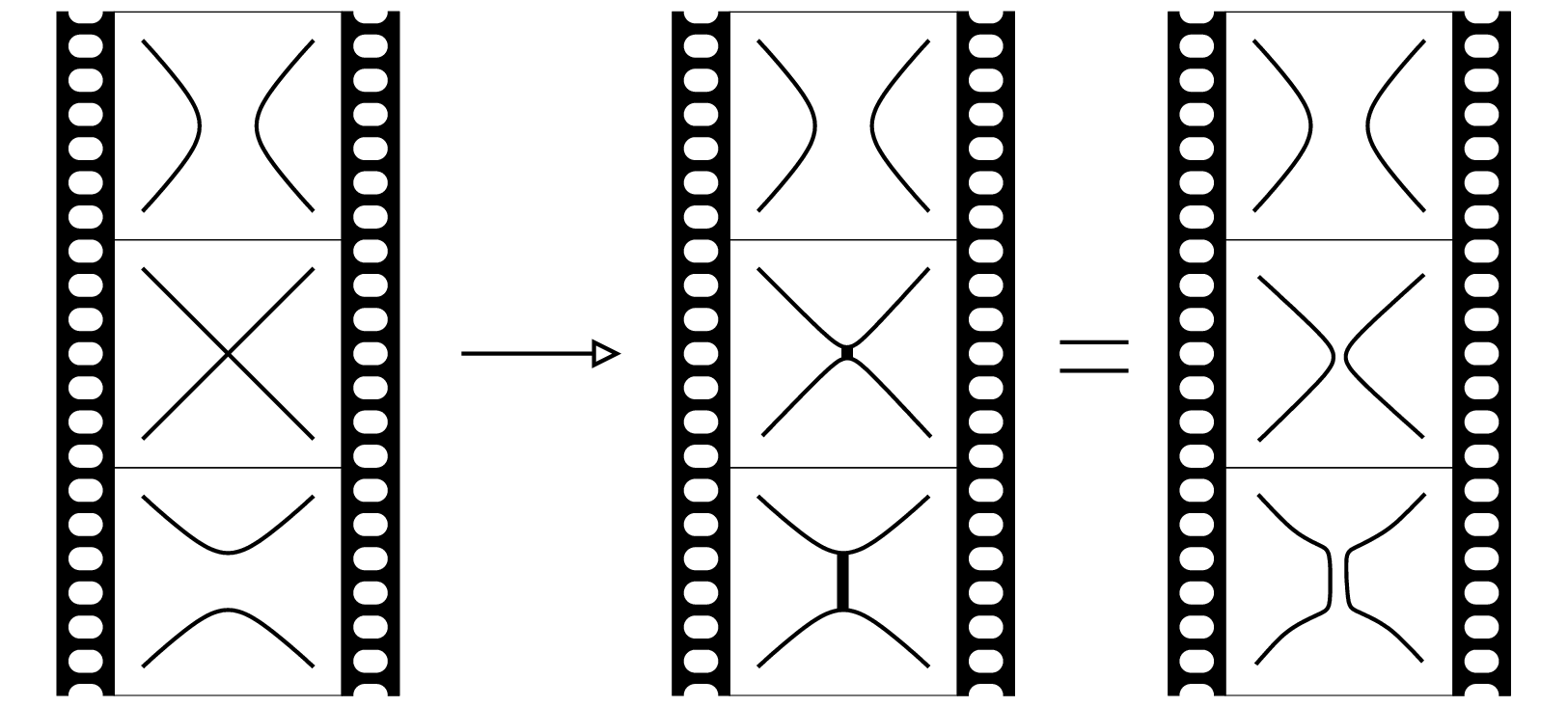}
\end{center}
\caption{Modifying the cobordism $S$ to get the cobordism $S'$ by
  delaying a saddle.}\label{fig:redband}
\end{figure}

During the movie for $S$, if at some point in time a saddle happens,
then in the movie for the new cobordism $S'$, immediately after that
point, attach an untwisted band to the two strands near the saddle and
add an $1$-handle to the link along that band. Therefore, in the
modified cobordism $S'$, the saddle has not yet taken place. However,
while watching the movie for $S'$, if we ignore all these new bands
and the associated $1$-handles, it will look exactly like the movie
for $S$.  This modification from $S$ to $S'$ is shown in
\hyperref[fig:redband]{Figure \ref*{fig:redband}}, with the bands
being denoted by thick lines.

In the movie for $S'$, move the endpoints of the bands as prescribed
by the movie for $S$, and move the bands themselves in any fashion,
while ensuring that they stay disjoint from each other and from the
rest of the link. Then at time $t=\frac{1}{2}$, after we have
encountered all the saddles of $S$, and after we have attached bands
for each one of them in $S'$, actually do all the saddles for
$S'$. The saddles have the effect of cutting open all the bands, which
can then deformation retract to their endpoints on the link. After
that, the movie for $S'$ agrees the movie for $S$.
\end{proof}

\section{Grid diagrams}

The best reference for this section is \cite{CMPOZSzDT}. Many of the definitions and theorems that we are about to mention in this section, are treated in great detail in that paper. However, for the sake of completeness, let us still go through some of the basic definitons and state some of the basic properties of grid diagrams.

\subsection{Grid diagrams for \texorpdfstring{$S^3$}{S3}}
An \emph{index-$n$ $S^3$-grid diagram} $\mf{G}=(T,\al,\be,O)$ is a picture of the following type on a torus $T$: $\al$ and $\be$ are two $n$-component embedded multicurves on $T$; each $\al$-circle is transverse to each $\be$-circle, and they intersect each other at one point; the $n$ components of $T\sm\al$ are called the \emph{horizontal annuli}; the $n$ components of $T\sm\be$ are called the \emph{vertical annuli}; $O$ is a formal sum of $n$ markings on $T$, such that each horizontal annulus contains one $O$-marking and each vertical annulus contains one $O$-marking; for some $0\leq k\leq n$, exactly $(n-k)$ of the $O$-markings are designated \emph{special} and are often denoted by $\varnothing$; the other $O$-markings are called \emph{normal} $O$-markings are numbered $O_1,\ldots,O_k$.

A \emph{generator} $x$ is a formal sum of $n$ points on $T$, often called \emph{$x$-coordinates}, such that each $\al$-circle contains one $x$-coordinate and each $\be$-circle contains one $x$-coordinate. The set of all the $n!$ generators is denoted by $\mc{G}_{\mf{G}}$. Given two generators $x,y\in\mc{G}_{\mf{G}}$ which differ in exactly two coordinates, a \emph{rectangle} joining $x$ to $y$ is an embedded rectangle $R\sbs T$ such that: the sides of $R$ lie on $\al\cup\be$; the top-right and bottom-left corners of $R$ are $x$-coordinates and the top-left and bottom-right corners of $R$ are $y$-coordinates, or in other words, $\del(\del R|_{\al})=y-x$; $R$ does not contain any other $x$-coordinates; and $R$ does not contain any special $O$-marking. For $x,y\in\mc{G}_{\mf{G}}$, the set $\mc{R}_{\mf{G}}(x,y)$ is defined to be empty if $x$ and $y$ do not differ in exactly two coordinates, or else, it is defined to be the set of all rectangles joining $x$ to $y$. Given a rectangle $R\in\mc{R}_{\mf{G}}(x,y)$, the number $n_i(R)$ is defined to be $1$ if $R$ contains $O_i$, and is defined to be $0$ otherwise.

To each generator we can associate an integer-valued grading $M$
called the \emph{Maslov grading} in the following way: the torus is
cut up along some $\al$-circle and some $\be$-circle and identified
with $[0,n)\times[0,n)$, such that the $\al$-circles become the lines
$[0,n)\times\{i\}$ for $i\in\{0,\ldots,n-1\}$, and the $\be$-circles
become the lines $\{i\}\times[0,n)$ for $i\in\{0,\ldots,n-1\}$; let
$\mc{J}$ be the bilinear form on the singular $0$-chains of $\R^2$,
such that, if $p=(p_1,p_2)$ and $q=(q_1,q_2)$ are two points in
$\R^2$, $\mc{J}(p,q)=\frac{1}{2}$ if $(p_1-q_1)(p_2-q_2)>0$ and
$\mc{J}(p,q)=0$ otherwise; for any generator $x\in\mc{G}_{\mf{G}}$,
the Maslov grading is defined as $M(x)=\mc{J}(x-O,x-O)+1$.

The \emph{grid chain complex} $\CF_{\mf{G}}$ over $\F_2$ is defined in the following way: it is freely generated over $\F_2[U_1,\ldots, U_k]$ by the elements of $\mc{G}_{\mf{G}}$; the Maslov grading is extended to $\CF_{\mf{G}}$ by declaring the Maslov grading of each $U_i$ to be $(-2)$; the homological grading is simply the Maslov grading; the boundary map $\del$ is $U_i$-equivariant and for any $x\in\mc{G}_{\mf{G}}$, it is given by 
$$\del x=\sum_{y\in\mc{G}_{\mf{G}}}y\sum_{R\in\mc{R}_{\mf{G}}(x,y)}\prod_i U_i^{n_i(R)}.$$

\begin{thm}\cite{CMPOSS, CMPOZSzDT}\label{thm:basicproperty}
  If $k$, the number of normal $O$-markings, is less than $n$, then
  the homology of $\CF_{\mf{G}}$ is isomorphic, as graded
  $\F_2[U_1,\ldots,U_k]$-modules, to $\otimes^{n-k-1}(\F_2\oplus\F_2[-1])$, where
  each $U_i$ acts trivially on the right hand side, each $\F_2$ lives
  in grading zero and $[i]$ denotes a grading shift by $i$.
\end{thm}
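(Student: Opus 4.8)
The plan is to compute $H_*(\CF_{\mf{G}})$ by induction on the grid size $n$, removing one $O$-marking at a time through a destabilization that realises a chain homotopy equivalence of grid complexes. Because every grid diagram with only $O$-markings presents $S^3$, the homology should depend only on the pair $(n,k)$, and the destabilizations will let me strip the diagram down to a single marking while keeping exact track of how the homology changes.

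First I would treat the normal markings. I expect that destabilizing at a normal $O$-marking $O_i$ gives a quasi-isomorphism from $\CF_{\mf{G}}$ to the complex of the grid of size $n-1$ with $(n,k)$ replaced by $(n-1,k-1)$, under which the variable $U_i$ is absorbed; in particular $U_i$ must then act as zero on $H_*(\CF_{\mf{G}})$. Iterating over all $k$ normal markings reduces $\mf{G}$ to a grid of size $n-k$ in which every marking is special and no $U$-variables remain, and this already shows that every $U_i$ acts trivially on the homology, as claimed. I would then destabilize the special markings: each such destabilization should identify the homology of an all-special grid of size $m$ with that of the all-special grid of size $m-1$ tensored with a single factor $\F_2 \oplus \F_2[-1]$. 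Iterating down to the size-$1$ grid, whose complex is a single generator with homology $\F_2$, assembles $H_*(\CF_{\mf{G}})$ as $\otimes^{n-k-1}(\F_2 \oplus \F_2[-1])$.

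The gradings require a separate check: I would verify that the quasi-isomorphisms shift the Maslov grading $M(x) = \mc{J}(x-O,x-O)+1$ correctly, so that the surviving generator of the base case sits in grading zero and each tensor factor $\F_2 \oplus \F_2[-1]$ is split between gradings $0$ and $-1$. Since the ground ring is $\F_2$ there are no sign issues, so this reduces to bookkeeping of which rectangles contain the relevant markings before and after each destabilization.

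The main obstacle is arranging and justifying the destabilizations. A destabilization is only available in a specific local $2\times 2$ configuration, so for a general permutation of $O$-markings I must first apply commutation moves and show that the homology of $\CF_{\mf{G}}$ is invariant under them; I must then construct the destabilization chain homotopy equivalence itself and confirm its two distinct effects — homology-preserving at a normal marking, tensoring by $\F_2 \oplus \F_2[-1]$ at a special marking — together with their grading shifts. Establishing these commutation and destabilization maps, that is, the invariance of this $O$-only grid complex under the grid moves, is the technical heart of the argument.
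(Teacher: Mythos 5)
First, a point of comparison that matters here: the paper never proves this theorem at all --- it is imported wholesale from \cite{CMPOSS, CMPOZSzDT} and used as a black box --- so your proposal can only be measured against the cited literature, and there it is essentially the standard argument: strip the grid down by destabilisations, with commutations to put markings in position. The normal-marking half of your plan is sound and matches exactly the machinery the paper itself imports as $S^3$-grid move (4): the snail map is a Maslov-grading-preserving quasi-isomorphism, it needs no positional hypothesis in the $S^3$-grid setting (so no commutations are required for this half, only the renumbering move), and it intertwines the destabilised variable $U_{k+1}$ with multiplication by $U_j$ (the variable in the adjacent row) or by $0$. One caveat of ordering: a single such destabilisation does \emph{not} show that ``$U_i$ acts as zero''; it only shows that each variable acts on homology the way some variable of a smaller grid does. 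Triviality of all the $U_i$-actions is only obtained at the end of the induction, once the all-special grid, which has no variables left, is reached.

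The genuine thin spot is the special-marking step. No destabilisation chain map can realise what you want there: the homology of the all-special size-$m$ grid has dimension $2^{m-1}$ while that of the size-$(m-1)$ grid has dimension $2^{m-2}$, so no map between the two grid complexes themselves is a quasi-isomorphism; correspondingly, the paper's $S^3$-grid move (5) (the map $F^L$ of \cite{CMPOZSzDT}) raises Maslov grading by $1$ and is merely surjective on homology. So the phrase ``construct the destabilisation chain homotopy equivalence and confirm it tensors by $\F_2\oplus\F_2[-1]$'' conceals precisely the idea that is missing. The standard fix is the mapping-cone analysis of \cite[Section 3.2]{CMPOZSzDT}; alternatively, here is a short algebraic argument that also removes any need for commutation invariance: in the all-special size-$m$ grid, declare one marking to be normal with variable $U$, and let $C$ be the resulting complex, so that $C$ is free over $\F_2[U]$ and the all-special complex is $C/UC$. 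By the normal-destabilisation step and induction, $H_*(C)\cong\otimes^{m-2}(\F_2\oplus\F_2[-1])$ and $U$ acts as $0$ on $H_*(C)$. The long exact sequence of $0\to C\xrightarrow{U} C\to C/UC\to 0$ then breaks into short exact sequences $0\to H_d(C)\to H_d(C/UC)\to H_{d+1}(C)\to 0$ (the degree shift coming from $U$ having Maslov degree $-2$), and these split over $\F_2$, giving $H_*(C/UC)\cong H_*(C)\otimes(\F_2\oplus\F_2[-1])$. Together with your base case (the size-one grid has a single generator, which sits in Maslov grading $0$), this completes the induction and yields both the graded and the module statements of the theorem.
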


In light of the above theorem, the number $n-k-1$ is often called the \emph{smallest Maslov grading}\phantomsection\label{smallestgrading} since it is the smallest grading in which the homology of $\CF_{\mf{G}}$ is supported; furtheremore, the rank of the homology in the smallest Maslov grading is always one.

\subsection{Grid diagrams for links}
An \emph{index-$n$ link-grid diagram} $\mf{L}=(T,\al,\be,O,X)$ is a picture on a torus $T$ such that: $X$ is a formal sum of $n$ points on the torus; if $\mf{f}(\mf{L})=(T,\al,\be,O)$ is the diagram obtained from $\mf{L}$ by forgetting the $X$-markings, then $\mf{f}(\mf{L})$ is an index-$n$ $S^3$-grid diagram; furthermore, each horizontal annulus contains some $X$-marking and each vertical annulus contains some $X$-marking.  

Given an index-$n$ link-grid diagram $\mf{L}$, we can produce $n^2$ links in the following way: cut the torus $T$ along some $\al$-circle and some $\be$-circle to identify it with $[0,n)\times[0,n)$; in every horizontal strip, join the $X$-marking to the $O$-marking by a horizontal line segment, and in every vertical strip join the $O$-marking to the $X$-marking by a vertical line segment, with the understanding that if there is a square containing both an $X$-marking and an $O$-marking, then we put a small unknot in that square; and finally at every crossing, declare the vertical segment to be the overpass. These $n^2$ links, thus obtained, are all isotopic to one another; therefore, a link-grid diagram represents a link isotopy class. Whenever we say that a link $L$ is represented by a link-grid diagram $\mf{L}$, we mean that $\mf{L}$ represents the link isotopy class that contains $L$. Call $\mf{L}$ \emph{tight}\phantomsection\label{tight}, if every link component in $\mf{L}$ contains exactly one special $O$-marking.

\begin{lem}\cite{gridPC}\label{lem:cromwelleasy}
Every link can be represented by link-grid diagrams.
\end{lem}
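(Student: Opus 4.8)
The plan is to build the grid markings directly from a \emph{rectilinear} (axis-parallel) planar picture of the link. First I would isotope $L$ to a piecewise-linear representative and project it generically to obtain a diagram $D$, then replace $D$ by an orthogonal diagram in which every arc is a concatenation of horizontal and vertical segments meeting at right-angled corners, with all crossings transverse; this is routine, since one can approximate each arc by a staircase of axis-parallel segments and round off the redundant corners. After a further small perturbation I would put the diagram into general position, so that the horizontal segments lie at pairwise distinct heights and the vertical segments lie at pairwise distinct $x$-coordinates, with no two corners or crossings sharing a coordinate. Since every corner is the common endpoint of exactly one horizontal and one vertical segment, and around each component horizontal and vertical segments alternate (so each component, and hence the whole link, has equally many of each, say $n$ of each), the distinct heights partition the picture into $n$ horizontal strips each containing a single horizontal segment and the distinct widths into $n$ vertical strips each containing a single vertical segment.

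The one requirement that is genuinely not free, and the step I expect to be the main obstacle, is the crossing convention: the definition demands that at each crossing the \emph{vertical} strand be the overpass, while the orthogonal diagram produced above will in general have crossings where the horizontal strand is on top. The fix must be carried out by an honest isotopy of $L$ in $S^3$ (not a crossing change, and not merely a planar move on $D$). The cleanest way to phrase the goal is to isotope $L$ into a \emph{separated} position in which all horizontal segments lie in the plane $z=0$, all vertical segments lie in the plane $z=1$, and consecutive segments are joined by short vertical connectors; projecting such a configuration to the $xy$-plane forces every vertical strand to pass over every horizontal strand, so that all crossings are vertical-over-horizontal by construction. Starting from the orthogonal diagram, the segments lying on the wrong side of a crossing are corrected by finger moves that slide an over-arc across the nearest corner of the understrand, each of which is an ambient isotopy; the hard part is to argue, by an induction on a suitable complexity, that finitely many such moves — possibly introducing extra rows and columns — bring $L$ into separated position. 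Granting this, the crossing condition holds, and the constructions of the previous paragraph apply unchanged.

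Finally I would read off the grid data. Placing the separated orthogonal diagram inside a large square that I regard as a fundamental domain of the torus $T$ (the link never uses the wrap-around), the corners become the $O$- and $X$-markings. Traversing each component, the corners alternate between the two colors, and because each component has an even number of corners this alternating coloring is globally consistent; I would choose it so that, with respect to the orientation, every horizontal segment runs from an $X$ to an $O$ and every vertical segment from an $O$ to an $X$. By the general-position step each horizontal strip then contains exactly one $X$ and one $O$, and likewise each vertical strip, so the data $(T,\al,\be,O,X)$ meets every condition of an index-$n$ link-grid diagram (the designation of special versus normal $O$-markings being immaterial for this existence statement). Reconnecting the markings by the recipe of the preceding subsection reproduces the orthogonal diagram, hence $L$, so $\mathfrak{L}$ represents $L$, as required.
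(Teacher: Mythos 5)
Your strategy---rectilinear approximation, general position, then repairing the crossings at which the horizontal strand is the overpass---is the same as in Cromwell's argument \cite{gridPC}, which is all the paper itself offers for this lemma (the paper re-invokes the same recipe in its discussion of link-grid move (5): take a rectilinear approximation and perform the local adjustment of \cite[Figure 7]{gridPC} at each offending crossing). But as written your proof has a genuine gap, and it sits exactly where you place it: you never prove the claim you introduce with ``the hard part is to argue, by an induction on a suitable complexity, that finitely many such moves \ldots bring $L$ into separated position. Granting this\ldots''. Everything before and after that sentence is routine bookkeeping, so the granted claim \emph{is} the mathematical content of the lemma; a proof cannot leave it as a black box.

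The gap can be closed by one concrete observation, and then neither a complexity induction nor the separated-position formalism is needed. Call a crossing \emph{bad} if its horizontal strand is the overpass, and suppose the horizontal arc $a$ passes over the vertical arc $b$. Let $P$ be an endpoint of $b$, let $c$ be the horizontal segment attached to $b$ at $P$, and choose $\epsilon>0$ smaller than every gap between distinct vertical coordinates in the diagram. By general position, the only strand whose projection meets that of the small piece of $a$ inside the vertical slab of width $2\epsilon$ around $b$ is $b$ itself, which lies below $a$; so you may lift that piece of $a$ above the entire diagram and drag it, keeping it on top throughout, parallel to $b$ and around the corner $P$. In the projection the re-routed $a$ consists of two vertical pieces at horizontal distance $\epsilon$ from $b$ and one short horizontal hook just beyond $P$. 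The hook crosses no vertical segment at all (no vertical segment other than $b$ comes within $\epsilon$ of $b$, and $b$ stops at $P$), while the two vertical pieces cross only horizontal segments---those crossing the slab, together with $c$---and they cross each of them as the overpass, since the moved arc stays above everything. Hence the move erases the bad crossing with $b$ and every crossing it creates is vertical-over-horizontal: the number of bad crossings drops by exactly one, and choosing the new coordinates generically keeps the diagram in general position. Repeating once per bad crossing terminates in finitely many steps, after which your marking construction in the final paragraph applies verbatim.
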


In a link-grid diagram, generators can be endowed with a
$\frac{1}{2}\Z$-valued grading $A$ called the \emph{Alexander grading}
as follows: the torus is cut up along an $\al$-circle and $\be$-circle
such that it can be identified with $[0,n)\times[0,n)$; if $\mc{J}$ is
the bilinear form on the $0$-chains of $\R^2$ from before, then for
any generator $x\in\mc{G}_{\mf{f}(\mf{L})}$, the Alexander grading is
defined as 
\begin{align*}
A(x)&=\mc{J}(x-\frac{1}{2}(X+O),X-O)-\frac{n-1}{2}\\
&=\mc{J}(x,X)-\mc{J}(x,O)-\frac{1}{2}\mc{J}(X,X)+\frac{1}{2}\mc{J}(O,O)-\frac{n-1}{2}.
\end{align*}
This
can be extended to an Alexander grading on $\CF_{\mf{f}(\mf{L})}$ by
declaring the Alexander grading of each $U_i$ to be $(-1)$. An astute
reader will observe that our definition of Alexander grading differs
from the usual definition of Alexander grading \cite{CMPOZSzDT} by an
additive constant of $\frac{l-1}{2}$, where $l$ is the number of link
components; therefore, the two definitions agree for knots. The
boundary map $\del$ does not increase this Alexander grading. This
leads to the following definition of the \emph{Alexander filtration}
on the grid chain complex: for every $a\in\frac{1}{2}\Z$, the
filtration level $\mc{F}_{\mf{L}}(a)\sbseq \CF_{\mf{f}(\mf{L})}$ is
defined to be the subcomplex supported in Alexander grading $a$ or
less.

Define $\tau_{\mf{L}}$ to be smallest $a\in\frac{1}{2}\Z$ such that the map induced on homology from the inclusion $\mc{F}_{\mf{L}}(a)\hookrightarrow \CF_{\mf{f}(\mf{L})}$ is non-trivial.

\begin{thm}\cite{CMPOSS}
If a knot $K$ is represented by a tight link-grid diagram $\mf{L}$, then $\tau(K)=\tau_{\mf{L}}$.
\end{thm}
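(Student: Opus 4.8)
The plan is to compare the grid filtered complex $(\CF_{\mf{f}(\mf{L})},\mc{F}_{\mf{L}})$ with the holomorphically defined knot filtration on $\widehat{\CF}(S^3)$, and to observe that $\tau(K)$ and $\tau_{\mf{L}}$ are then computed by formally identical recipes. Recall that $\tau(K)$ is the smallest $a$ for which the inclusion-induced map $H_*(\widehat{\mc F}(K,a))\to\widehat{\mathit{HF}}(S^3)\cong\F_2$ is nonzero, where $\widehat{\mc F}(K,a)$ is the Alexander filtration that the knot $K$ puts on $\whcfk(K)\cong\widehat{\CF}(S^3)$. This is verbatim the definition of $\tau_{\mf{L}}$, with $\CF_{\mf{f}(\mf{L})}$ playing the role of $\widehat{\CF}(S^3)$: by \hyperref[thm:basicproperty]{Theorem \ref*{thm:basicproperty}}, a tight diagram of a knot has exactly one special $O$-marking, so $k=n-1$ and $H_*(\CF_{\mf{f}(\mf{L})})\cong\F_2$ is supported in Maslov grading $0$ with every $U_i$ acting as $0$, exactly mirroring $\widehat{\mathit{HF}}(S^3)$. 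Thus the whole problem reduces to matching the two filtered objects.

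The technical heart is to upgrade the combinatorial-equals-holomorphic identification of knot Floer homology to a statement that respects the Alexander filtration. Concretely, I would produce a filtered quasi-isomorphism, over $\F_2[U_1,\dots,U_{n-1}]$, between $(\CF_{\mf{f}(\mf{L})},\mc{F}_{\mf{L}})$ and the holomorphic knot-filtered complex of $K$. The $n-1$ normal, $U_i$-active $O$-markings are the auxiliary basepoints distinguishing the index-$n$ grid from a minimal doubly-pointed diagram of $K$; by \hyperref[thm:basicproperty]{Theorem \ref*{thm:basicproperty}} they are homologically neutral, contributing nothing to the rank-one homology $H_*(\CF_{\mf{f}(\mf{L})})\cong\F_2$. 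What remains to verify is that they are also \emph{filtration}-neutral: as each auxiliary basepoint is inserted, the Alexander level at which the single surviving Maslov-grading-$0$ class first lies in the image of $H_*(\mc{F}_{\mf{L}}(a))$ does not change. This is the grid analogue of the invariance of $\tau$ under quasi-stabilization. Granting it, the minimal $a$ computing $\tau_{\mf{L}}$ coincides with the minimal $a$ computing $\tau(K)$ holomorphically, whence $\tau_{\mf{L}}=\tau(K)$.

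The main obstacle is this filtered refinement: the combinatorial identification is typically proved at the level of total homology and gradings, whereas here it must be made compatible with the Alexander \emph{filtration}, not merely the Alexander grading. In practice I would either quote such a filtered statement directly from the combinatorial knot Floer package, or re-derive what is needed by showing that $\tau_{\mf{L}}$ is unchanged under the grid moves (commutation and (de)stabilization) relating any two grids of $K$. The delicate case is (de)stabilization, where one inserts a new $O$-marking and a new $X$-marking and must track how the grading formula $A(x)=\mc{J}(x,X)-\mc{J}(x,O)-\tfrac12\mc{J}(X,X)+\tfrac12\mc{J}(O,O)-\tfrac{n-1}{2}$ is perturbed, and confirm that the filtration level of the distinguished homology class is preserved. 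Either route isolates the same key point, that the auxiliary basepoints are filtration-neutral for $\tau$.
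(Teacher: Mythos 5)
First, a point of comparison that matters here: the paper does not prove this statement at all --- it is quoted as a black box from \cite{CMPOSS} (with the remark that $\tau_{\mf{L}}$ is ``very close to the original definition of $\tau$''), so your sketch must be measured against the proof in that reference rather than against anything internal to this paper. Your framing is correct as far as it goes: tightness gives $k=n-1$, so Theorem \ref{thm:basicproperty} makes $H_*(\CF_{\mf{f}(\mf{L})})\cong\F_2$ in Maslov grading $0$, and the two recipes for $\tau$ are then formally identical, reducing everything to a filtered identification. But your proposal has a genuine gap at exactly the place you label ``the technical heart'': you never produce, nor indicate any mechanism for producing, a map between $\CF_{\mf{f}(\mf{L})}$ and the holomorphic knot-filtered complex. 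The actual mechanism in \cite{CMPOSS} is not an ``upgrade'' of a homology-level identification; it is that a grid diagram, with its $O$- and $X$-markings, \emph{is} a multi-pointed Heegaard diagram for $(S^3,K)$, and because every elementary region of the grid torus is a square, the Maslov-index-one holomorphic disks in $\Sym^n(T)$ counted by the \Ozsvath--\Szabo{} differential correspond bijectively to empty rectangles. Consequently the combinatorial complex, Alexander filtration included, \emph{coincides on the nose} with the holomorphic filtered complex of that particular diagram --- there is no comparison map to construct. What remains is the invariance of the filtered theory for multi-pointed diagrams under Heegaard moves and under adding/removing basepoint pairs (from \cite{POZSzlinkinvariants}); that holomorphic invariance theorem is precisely where your ``filtration-neutrality of auxiliary basepoints'' lives, and it cannot be conjured from grid combinatorics alone.

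Your proposed fallback --- proving $\tau_{\mf{L}}$ is unchanged under commutations and (de)stabilisations relating any two grids of $K$ --- has a second, independent gap: grid-move invariance only shows that $\tau_{\mf{L}}$ is a well-defined knot invariant; it cannot identify that invariant with the holomorphically defined $\tau(K)$. At some point a single grid complex must be matched against a single holomorphic complex, and that is exactly the disks-equal-rectangles step above. (Grid-move invariance is nevertheless genuinely useful; indeed it is part of what the present paper establishes combinatorially in Theorem \ref{thm:main} --- but there the equality with the holomorphic $\tau$ is never needed, since the paper works with $\tau_{\mf{L}}$ throughout and only cites the present statement to connect its results to the literature.)
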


This is actually very close to the original definition of $\tau$. Combining this fact with the main result from \cite{POZSz4ballgenus}, we get the following:

\begin{thm}\cite{POZSz4ballgenus, CMPOSS}\label{thm:main}
For tight link-grid diagrams $\mf{L}$ that represent knots,
$\tau_{\mf{L}}$ depends only on the isotopy class of the knot. If we
define $\tau(K)$ to be equal to $\tau_{\mf{L}}$ for any tight
link-grid diagram representing $K$, then
$\left|\tau(K_1)-\tau(K_2)\right|\leq g$, whenever there is a
connected genus $g$ knot cobordism from $K_1$ to $K_2$.
\end{thm}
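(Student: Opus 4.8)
The plan is to reduce both assertions to an understanding of how $\tau_{\mf{L}}$ changes under the elementary pieces of a cobordism, using the normal form provided by Lemma~\ref{lem:isotopy}. First I would settle the invariance statement. Any isotopy of knots is realized by a finite sequence of grid moves --- commutations and (de)stabilizations --- and for each such move I would exhibit a chain homotopy equivalence between the two grid complexes that respects the Alexander filtration. Since $\tau_{\mf{L}}$ is read off purely from the Alexander filtration, as the smallest $a$ for which the inclusion $\mc{F}_{\mf{L}}(a)\hookrightarrow\CF_{\mf{f}(\mf{L})}$ is nontrivial on homology (a quantity that is finite and well defined thanks to Theorem~\ref{thm:basicproperty}), any such filtered homotopy equivalence forces $\tau_{\mf{L}}$ to agree for the two diagrams. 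This simultaneously shows that $\tau_{\mf{L}}$ depends only on the knot type and disposes of the two product regions $S^3\times[0,\frac{1}{4})$ and $S^3\times(\frac{3}{4},1]$ of Lemma~\ref{lem:isotopy}.

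For the cobordism inequality I would analyze the critical slices, working with the link version of $\tau_{\mf{L}}$ so that the intermediate multi-component stills are permitted; the normalization of the Alexander grading by $\frac{l-1}{2}$ introduced above is precisely what makes the filtration shifts uniform across a change in the number of link components. To a saddle I would associate an explicit chain map between the grid complexes of the stills immediately before and after it, realized by a local modification of the diagram (after first applying commutations and stabilizations from the invariance step to bring the two relevant strands into standard position). The central computation is that this saddle map is a filtered chain map moving the Alexander filtration level by at most $\frac{1}{2}$, so a single saddle changes $\tau_{\mf{L}}$ by at most $\frac{1}{2}$. Along each product region the stills are isotopic, so $\tau_{\mf{L}}$ is constant there by the invariance step, and the only nonconstant jumps occur across the critical slices.

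It then remains to count. Because $S$ is connected, every component created by a birth must later be reabsorbed by a merging saddle, and every component destroyed by a death must first have been produced by a splitting saddle; cancelling each such birth--saddle and saddle--death pair is a standard handle trade that changes neither the endpoints nor the genus, so I would reduce to the case of no births and no deaths. A connected genus $g$ surface bounding the two knots has Euler characteristic $-2g$, and with no births or deaths the relation $-s=-2g$ leaves exactly $s=2g$ saddles. Chaining the inequalities across the resulting sequence of grid diagrams --- constant across grid moves and jumping by at most $\frac{1}{2}$ across each of the $2g$ saddles --- yields $\left|\tau(K_1)-\tau(K_2)\right|\leq 2g\cdot\frac{1}{2}=g$.

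The main obstacle is the second step: constructing the saddle map on grid complexes and proving the sharp filtration bound of $\frac{1}{2}$. One must verify that an arbitrary saddle can always be brought into a local standard form by grid moves without disturbing the rest of the diagram, that the resulting map is genuinely a filtered chain map, and that its filtration degree under the chosen normalization is exactly what is claimed. Pinning down these constants, and confirming that the birth--merge and split--death cancellations survive at the level of the induced filtered maps rather than merely topologically, is the delicate heart of the argument; the invariance step and the Euler-characteristic bookkeeping are comparatively routine once the saddle map is understood.
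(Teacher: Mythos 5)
Your reduction to the case of no births and no deaths is the load-bearing step of the proposal, and it is false --- not just delicate, but false already at the topological level. If every birth--merge pair and every split--death pair could be cancelled by an isotopy of the cobordism preserving genus and endpoints, then any \emph{concordance} (genus $0$) could be reduced to a cobordism with $s=2g=0$ saddles, i.e.\ an isotopy; this would prove that concordant knots are isotopic, contradicting the existence of nontrivial slice knots. A concrete counterexample: any ribbon knot $K$ of fusion number one (e.g.\ the stevedore knot $6_1$) sits at the top of a cobordism (unknot) $\rightarrow$ (birth) $\rightarrow$ (two-component unlink) $\rightarrow$ (merge saddle) $\rightarrow K$; cancelling the birth against the merging saddle would exhibit $K$ as isotopic to the unknot. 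The point where the ``standard handle trade'' breaks down is that uniqueness of the gradient flow line from the saddle to the birth is \emph{not} sufficient for cancellation of embedded handles: to retract the merged component back along the band one needs the newborn unknot to bound a disk disjoint from the band, and a fusion band can be knotted so that no such disk exists. Your per-move estimate cannot repair this, because bounding each critical point's effect by $\frac{1}{2}$ in absolute value gives only $\left|\tau(K_1)-\tau(K_2)\right|\leq g+b+d$, which is why the cancellation step was essential to your count.

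The paper avoids this entirely: it keeps all births, saddles and deaths, and exploits the \emph{signs} of the Alexander shifts rather than their absolute values. Births (link-grid move $(4)$) and the split saddles preceding deaths (move $(6)$) shift the Alexander grading by $-\frac{1}{2}$, while saddles (move $(5)$) and deaths (move $(7)$) shift it by $+\frac{1}{2}$; since a connected genus $g$ cobordism has $s=2g+b+d$ saddles, the signed total is exactly $-\frac{b}{2}+\frac{2g+b}{2}-\frac{d}{2}+\frac{d}{2}=g$, with no need to remove anything. The price is homological rather than topological: the maps for moves $(6)$ and $(7)$ are not quasi-isomorphisms (they shift the Maslov grading by $\mp 1$ and are only injective, respectively surjective, on homology), and the paper upgrades them to isomorphisms by working in the smallest Maslov grading, where Theorem \ref{thm:basicproperty} says the homology has rank one. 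Connectivity of the cobordism is used not for handle cancellation but to order the saddles so that the last $d$ of them are splits, keeping the relevant diagrams tight. If you want to salvage your outline, replace the cancellation step by this signed bookkeeping together with the rank-one argument for the non-quasi-isomorphism moves; everything else in your plan (invariance via filtered maps for grid moves, the $\frac{1}{2}$-shift saddle map, the Euler characteristic count) is consistent with the paper's argument.
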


The proof of this theorem requires the holomorphic techniques of \cite{POZSz4ballgenus}. We will bypass those methods, and give a new proof of this theorem using only grid diagrams. That is one of our main results.

\subsection{Moves on \texorpdfstring{$S^3$}{S3}-grid diagrams}
In this subsection, we will describe certain \emph{$S^3$-grid moves} which convert an $S^3$-grid diagram $\mf{G}_1$ to another $S^3$-grid diagrams $\mf{G}_2$, and in each case, we will define chain maps from $\CF_{\mf{G_1}}$ to $\CF_{\mf{G}_2}$. Given a link cobordism from a link $L$ represented by a link-grid diagram $\mf{L}$ to a link $L'$ represented by a link-grid diagram $\mf{L}'$, we will be able to construct a sequence of link-grid diagrams $\mf{L}=\mf{L}_0,\mf{L}_1,\ldots,\mf{L}_{m-1},\mf{L}_m=\mf{L}'$, such that for each $i$, $\mf{f}(\mf{L}_i)$ and $\mf{f}(\mf{L}_{i+1})$ will be related by one of the following $S^3$-grid moves. Therefore, we will have chain maps $\CF_{\mf{f}(\mf{L}_i)}\rightarrow \CF_{\mf{f}(\mf{L}_{i+1})}$, and by composing, we will get a chain map from $\CF_{\mf{f}(\mf{L})}$ to $\CF_{\mf{f}(\mf{L}')}$, which we will use to find a relation between $\tau_{\mf{L}}$ and $\tau_{\mf{L}'}$.

\subsubsection*{$S^3$-grid move (1)}\label{subsub:s3identity} $\mf{G}_1=\mf{G}_2$. In this case, we define the chain map to be identity, which clearly preserves Maslov grading and is a quasi-isomorphism.

\subsubsection*{$S^3$-grid move (2)}\label{subsub:s3renumbering} $\mf{G}_2$ is obtained from $\mf{G}_1$ by renumbering the normal $O$-markings. If there are exactly $k$ normal $O$-markings, which are renumbered by some permutation $\sigma\in\mf{S}_k$, then the chain map sends $\prod_i U_i^{m_i} x$ to $\prod_i U_{\sigma(i)}^{m_i} x$. Although this map is not $U_i$-equivariant, it preserves the Maslov grading and is a quasi-isomorphism. 

\subsubsection*{$S^3$-grid move (3)}\label{subsub:s3commutation} $\mf{G}_2$ is obtained from $\mf{G}_1$ by a \emph{commutation}. There are two types of commutations: a horizontal commutation where we  interchange the $O$-markings in two adjacent horizontal annuli, or a vertical commutation where we interchange the $O$-markings in two adjacent vertical annuli. In either case, we represent both $\mf{G}_1$ and $\mf{G}_2$ by a single diagram $\mf{G}$ on the torus $T$, and the chain map is defined by counting certain pentagons in $\mf{G}$. For example, in a horizontal commutation, as illustrated in \hyperref[fig:commutation]{Figure \ref*{fig:commutation}}, the shaded pentagon contributes to the chain map. As shown in \cite[Section 3.1]{CMPOZSzDT}, the pentagon map also preserves the Maslov grading and is a quasi-isomorphism.

\begin{figure}
\psfrag{x}{$x$}
\psfrag{y}{$y$}
\psfrag{o}{$\OO$}
\psfrag{a}{$\al_2$}
\psfrag{a'}{$\al_1$}
\begin{center}
\includegraphics[width=0.5\textwidth]{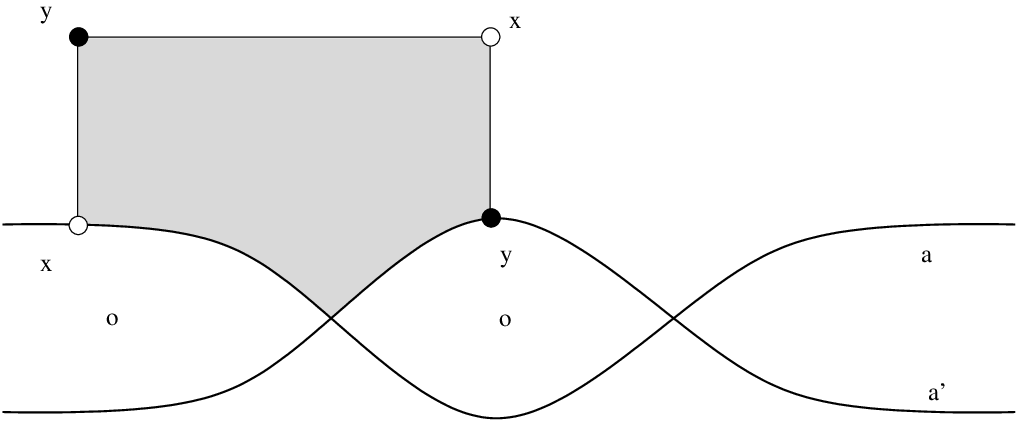}
\end{center}
\caption{The diagram $\mf{G}$. The grid diagram $\mf{G}_i$ is obtained from $\mf{G}$ by deleting the the circle labeled $\al_i$. If $x\in\mc{G}_{\mf{G}_1}$ is represented by the white circles and if $y\in\mc{G}_{\mf{G}_2}$ is represented by the black circles, then the shaded pentagon contributes a coefficient of $y$ for the chain map evaluated at $x$.}\label{fig:commutation}
\end{figure}

\subsubsection*{$S^3$-grid move (4)}\label{subsub:s3normaldestab} $\mf{G}_2$ is obtained from $\mf{G}_1$ by a \emph{normal   stabilisation} or a \emph{normal destabilisation}. In a normal destabilisation, we start with an index-$(n+1)$ $S^3$-grid diagram $\mf{G}_1$ with exactly $(k+1)$ normal $O$-markings, and we get the index-$n$ $S^3$-grid diagram $\mf{G}_2$ by deleting $O_{k+1}$ and then deformation retracting the closure of the horizontal annulus through $O_{k+1}$ to an $\al$-circle and deformation retracting the closure of the vertical annulus through $O_{k+1}$ to a $\be$-circle. A normal stabilisation is the reverse process of a normal destabilisation. Let us assume that $\mf{G}_2$ is obtained from $\mf{G}_1$ by a normal destabilisation; we will describe four chain maps: two chain maps $d_{11}$ and $d_{22}$ from $\CF_{\mf{G}_1}$ to $\CF_{\mf{G}_2}$, and two chain maps $s_{11}$ and $s_{22}$ from $\CF_{\mf{G}_2}$ to $\CF_{\mf{G}_1}$.

\begin{figure}
\psfrag{o}{$\varnothing$}
\psfrag{ok}[][][0.8]{$\OO_{k+1}$}
\psfrag{a0}{$\al_0$}
\psfrag{a1}{$\al_1$}
\psfrag{a2}{$\al_2$}
\psfrag{b0}{$\be_0$}
\psfrag{b1}{$\be_1$}
\psfrag{b2}{$\be_2$}
\begin{center}
\begin{tabular}{cc}
\epsfig{file=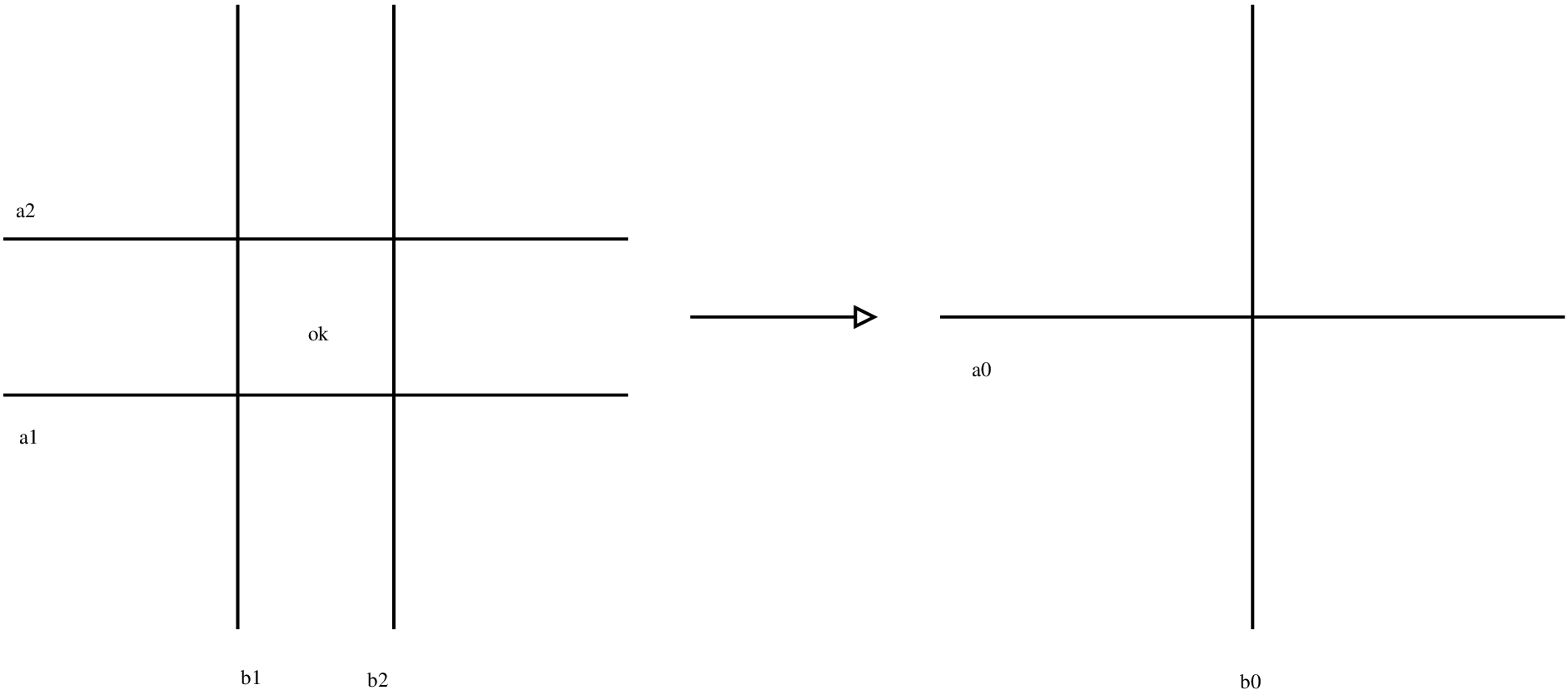,width=0.45\textwidth,clip=}&
\epsfig{file=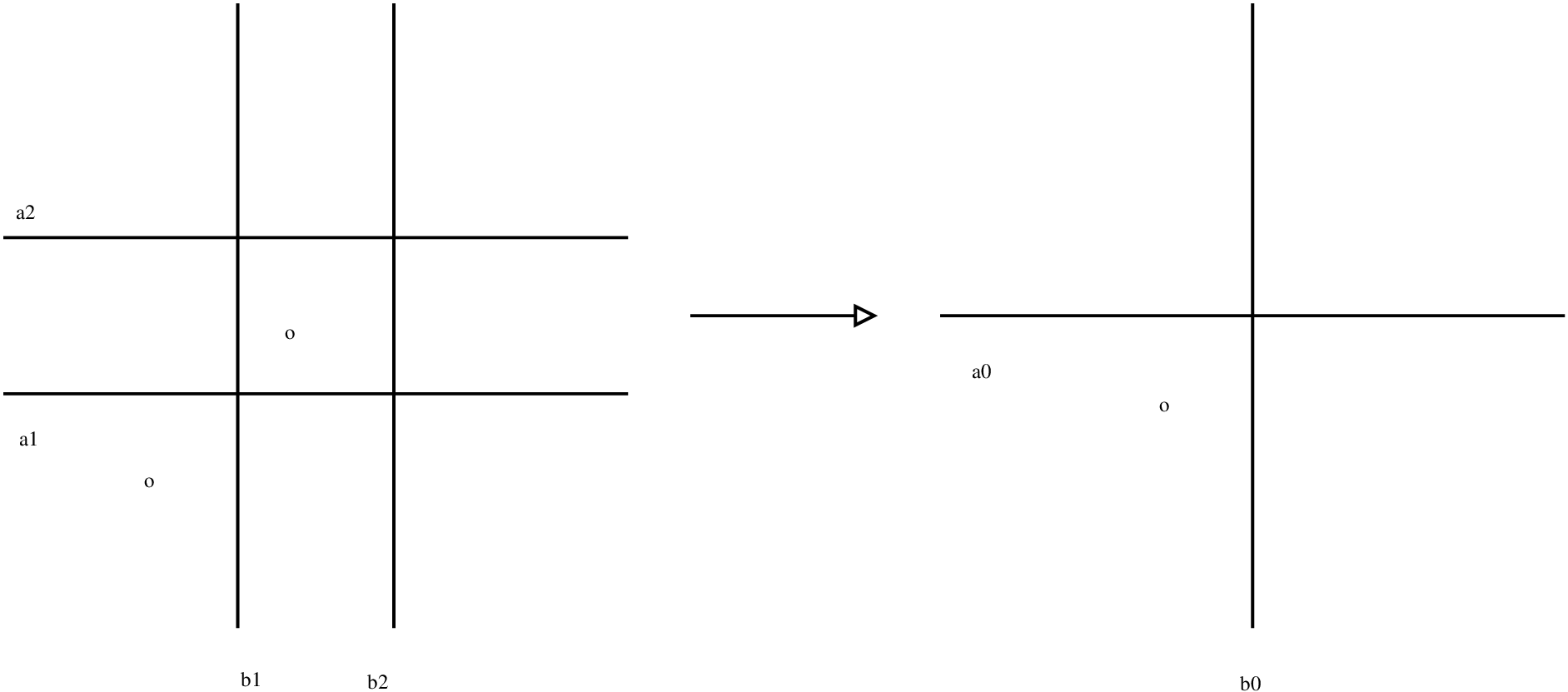,width=0.45\textwidth,clip=}
\end{tabular}
\end{center}
\caption{The two types of destabilisations.}\label{fig:destab}
\end{figure}

In the $S^3$-grid diagram $\mf{G}_2$, let the $\al$-circle which comes from deformation retracting the horizontal annulus be numbered $\al_0$, and let the $\be$-circle which comes from deformation retracting the vertical annulus be numbered $\be_0$. In the $S^3$-grid diagram $\mf{G}_1$, let the two $\al$-circles on the boundary of that horizontal annulus be numbered $\al_1$ and $\al_2$, such that $\al_1$ lies just below $\al_2$, and let the two $\be$-circles on the boundary of that vertical annulus be numbered $\be_1$ and $\be_2$, such that $\be_1$ lies just to left of $\be_2$. This is shown in the first part of \hyperref[fig:destab]{Figure \ref*{fig:destab}}.

Let us define four injective maps
$F_{ij}\colon\mc{G}_{\mf{G}_2}\rightarrow\mc{G}_{\mf{G}_1}$, for
$i,j\in\{1,2\}$. For defining $F_{ij}$, we identify the $\al$-circles
of $\mf{G}_1$, except $\al_i$, with the $\al$-circles of $\mf{G}_2$ in
the natural way, and we identify the $\be$-circles of $\mf{G}_2$,
except $\be_j$, with the $\be$-circles of $\mf{G}_2$ in the natural
way. Under these identifications, a generator $x\in\mc{G}_{\mf{G}_2}$
produces a formal sum of $n$ points in $\mf{G}_1$. We define
$F_{ij}(x)$ to be that formal sum plus $\al_i\cap\be_j$.

The destabilisation map $d_{11}$ is precisely the snail map $F^R$ as defined in \cite[Section 3.2]{CMPOZSzDT}. It is a homomorphism of $\F_2[U_1,\ldots,U_k]$-modules, and for $x\in\mc{G}_{\mf{G}_1}$, it is defined as
$$d_{11}(U_{k+1}^m x)=U^m\sum_{y\in\mc{G}_{\mf{G}_2}}y\sum_{D\in\mc{S}_1(x,F_{11}(y),\al_1\cap\be_1)}\prod_{1\leq   i\leq k} U_i^{n_i(D)}$$
where: $U=0$ if the horizontal annulus just below $\al_0$ contains a special $O$-marking, and $U=U_j$ if the horizontal annulus contains the normal $O$-marking $O_j$; $\mc{S}_1(x,z,p)$ is the set of all Type $(1)$ snail-like domains centered at $p$ joining $x$ to $z$, as illustrated in the bottom row of \cite[Figure 13]{CMPOZSzDT} or the top row of \hyperref[fig:snail]{Figure \ref*{fig:snail}}; and $n_i(D)$ is the number of times $D$ passes through $O_i$. As shown in \cite{CMPOZSzDT}, this map preserves the Maslov grading and is a quasi-isomorphism.

\begin{figure}
\psfrag{p}{$p$}
\begin{center}
\includegraphics[width=\textwidth]{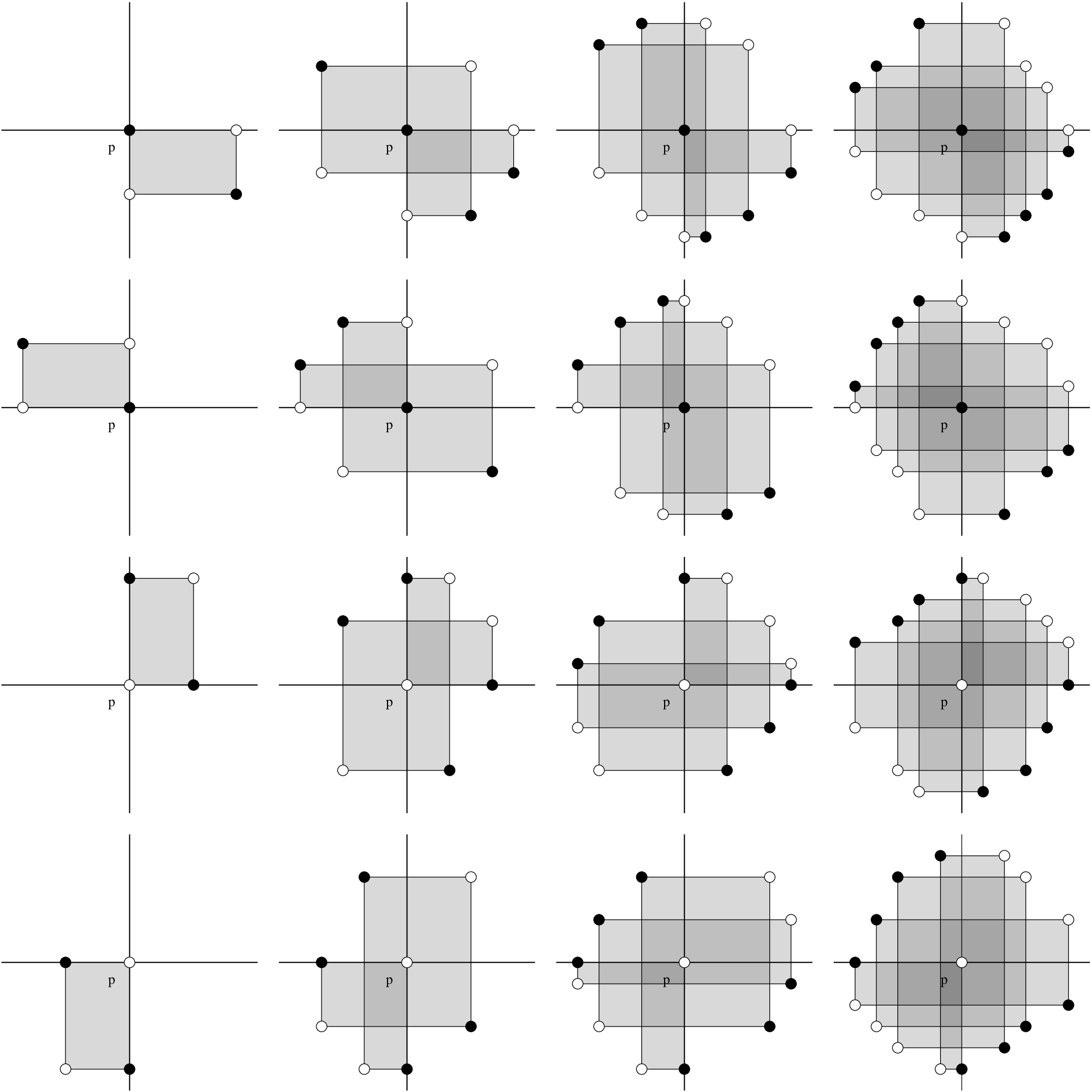}
\end{center}
\caption{The different types of snail-like domains. The $i\ith$ row displays some of the domains in $\mc{S}_i(x,z,p)$, where the $x$- and $z$-coordinates that are not disjoint from the domains are represented by the white and black circles, respectively. We always assume that none of the snail-like domains pass through any of the special $O$-markings.}\label{fig:snail}
\end{figure}

The destabilisation map $d_{22}$ is obtained by rotating all the diagrams by $180^{\circ}$. Stated more precisely,
$$d_{22}(U_{k+1}^m x)=U^m\sum_{y\in\mc{G}_{\mf{G}_2}}y\sum_{D\in\mc{S}_2(x,F_{22}(y),\al_2\cap\be_2)}\prod_{1\leq   i\leq k} U_i^{n_i(D)}$$
where: $U=0$ if the horizontal annulus just above $\al_0$ contains a special $O$-marking, and $U=U_j$ if the horizontal annulus contains the normal $O$-marking $O_j$; $\mc{S}_2(x,z,p)$ is the set of all Type $(2)$ snail-like domains centered at $p$ joining $x$ to $z$, as illustrated in the second row of \hyperref[fig:snail]{Figure \ref*{fig:snail}}; and $n_i(D)$ is the number of times $D$ passes through $O_i$.

The two stabilisation maps are defined similarly. Namely, for $x\in\mc{G}_{\mf{G}_2}$ and $j\in\{1,2\}$,
$$s_{jj}(x)=\sum_{y\in\mc{G}_{\mf{G}_1}}y\sum_{D\in\mc{S}_{j+2}(F_{jj}(x),y,\al_j\cap\be_j)}\prod_{1\leq   i\leq k} U_i^{n_i(D)}.$$
The proofs of Lemma 3.5 and Proposition 3.8 of \cite{CMPOZSzDT} go through after rotating all the diagrams by $\pm 90^{\circ}$. Therefore, the stabilisation maps are also chain maps; furthermore, they preserve the Maslov grading and are quasi-isomorphisms.

\subsubsection*{$S^3$-grid move (5)}\label{subsub:s3specialdestab} $\mf{G}_2$ is obtained from $\mf{G}_1$ by a \emph{special   destabilisation}.  This is like a normal destabilisation, except we assume that the $O$-marking that is being deleted is a special $O$-marking. We also assume that the square immediately to the bottom-left of this special $O$-marking also contains a special $O$-marking. The situation is illustrated in the second part of \hyperref[fig:destab]{Figure \ref*{fig:destab}}. We will reuse the notations that we had used to describe a normal destabilisation.

A \emph{hexagon} $H$ is an embedded hexagon in $\mf{G}_1$ with boundary lying in $\al\cup\be$, which has one $270^{\circ}$ angle at $\al_1\cap\be_1$ and five other $90^{\circ}$ angles, such that $H$ contains the special $O$-marking that is being deleted, but not the special $O$-marking that lies to the bottom-left of it; $H$ does not contain any other special $O$-marking, and $n_i(H)$ is defined to be $1$ if $H$ contains $O_i$, and is defined to be $0$ otherwise. A hexagon $H$ joins a generator $x\in\mc{G}_{\mf{G}_1}$ to a generator $z\in\mc{G}_{\mf{G}_1}$, if $H$ does not contain any $x$-coordinate in its interior, and $\del(\del H|_{\al})=z-x$; the set of all hexagons joining $x$ to $z$ is denoted by $\mc{H}(x,z)$.  The chain map $f\colon\CF_{\mf{G}_1}\rightarrow \CF_{\mf{G}_2}$ is $U_i$-equivariant for all $i$, and for $x\in\mc{G}_{\mf{G}_1}$, it is defined as follows:
$$f(x)=
\begin{cases}
F_{11}^{-1}(x)& \text{if }F_{11}^{-1}(x)\neq\varnothing,\\
\sum_{y\in\mc{G}_{\mf{G}_2}}y\sum_{H\in\mc{H}(x,F_{11}(y))}\prod_i U_i^{n_i(H)}& \text{otherwise.}
\end{cases}
$$

It is not hard to see that this map is a chain map which increases the Maslov grading by $1$. In fact, this map is simply the map $F^L$ of \cite[Section 3.2]{CMPOZSzDT}.  It follows from \cite[Proposition 3.8]{CMPOZSzDT} that this map is surjective at the level of homology.

\subsubsection*{$S^3$-grid move (6)}\label{subsub:s3saddle} $\mf{G}_2$
is obtained from $\mf{G}_1$ by the following process: we assume that
$\mf{G}_1$ has exactly $(k+1)$ normal $O$-markings and a $2\times 2$
square $B$ which contains a special $O$-marking on the top-left and
$O_{k+1}$ on the bottom-right; $\mf{G}_2$ is obtained from $\mf{G}_1$
by deleting the two $O$-marking in $B$ and adding two special
$O$-markings, one on the bottom-left and one on the top-right of
$B$. We define the $U_i$-equivariant chain map
$f\colon\CF_{\mf{G}_1}\rightarrow \CF_{\mf{G}_2}$ as follows: set
$U_{k+1}=0$; send every generator that does not have a coordinate at
the center of $B$ to zero; and send every generator with a coordinate
at the center of $B$ to itself.  It is easy to see that this is a
chain map which drops Maslov grading by $1$. Furthermore, if we
compose this map $f$ with the map corresponding to
\hyperref[subsub:s3specialdestab]{$S^3$-grid move (5)}, we get one of
the normal destabilization maps, as described in \cite[Section
3.2]{CMPOZSzDT}. (We have not discussed this specific destabilization
map while discussing \hyperref[subsub:s3normaldestab]{$S^3$-grid move
  (4)}; but in our notation, this would have been the destabilization
map $d_{21}$.) Since the composition is a quasi-isomorphism, the map
$f$ must be injective at the level of homology; and indeed, this gives
an alternate proof of the fact that the map for
\hyperref[subsub:s3specialdestab]{$S^3$-grid move (5)} is surjective
at the level of homology.

\subsection{Moves on link-grid diagrams}
Quite like in the previous subsection, in this subsection we will analyse certain \emph{link-grid moves} which convert a link-grid diagram $\mf{L}_1$ to another link-grid diagram $\mf{L}_2$. In all the link-grid moves that we will analyse, the two $S^3$-grid diagrams $\mf{f}(\mf{L}_1)$ and $\mf{f}(\mf{L}_2)$ will already be related by one of the six $S^3$-grid moves; therefore, we already have maps $\CF_{\mf{f}(\mf{L}_1)}\rightarrow \CF_{\mf{f}(\mf{L}_2)}$. We will simply determine the \emph{Alexander grading shifts} in each case. The Alexander grading shift of a map $f\colon\CF_{\mf{f}(\mf{L}_1)}\rightarrow \CF_{\mf{f}(\mf{L}_2)}$ is the smallest possible $s\in\frac{1}{2}\Z$, such that $f$ shifts the Alexander grading of each element by at most $s$. In other words, for each $a\in\frac{1}{2}\Z$, there is the following commuting square.
$$\begin{tikzpicture}
\matrix(m)[matrix of math nodes,
row sep=3.5em, column sep=3em,
text height=2ex, text depth=0.25ex]
{\mc{F}_{\mf{L}_1}(a)&\mc{F}_{\mf{L}_2}(a+s)\\
\CF_{\mf{f}(\mf{L}_1)}& \CF_{\mf{f}(\mf{L}_2)}\\};
\path[right hook->](m-1-1) edge (m-2-1);
\path[right hook->](m-1-2) edge (m-2-2);
\path[->](m-1-1) edge (m-1-2);
\path[->](m-2-1) 
edge node[above]{$f$} (m-2-2);
\end{tikzpicture}$$

\subsubsection*{Link-grid move (1)}\label{subsub:linkrenumber} $\mf{L}_2$ is obtained from $\mf{L}_1$ by \emph{renumbering} the normal $O$-markings. This corresponds to the \hyperref[subsub:s3renumbering]{$S^3$-grid move $(2)$}, and the Alexander grading shift is $0$.

\subsubsection*{Link-grid move (2)}\label{subsub:linkcommute}
$\mf{L}_2$ is obtained from $\mf{L}_1$ by a
\emph{commutation}. Commutation comes in two flavors, horizontal
commutation and vertical commutation. In a horizontal commutation, we
choose two adjacent horizontal annuli such that the zero-sphere
obtained by projecting the markings in one annulus to the middle
$\al$-circle is unlinked from the zero-sphere obtained by projecting
the markings in the other annulus to that $\al$-circle; and then we
interchange the two horizontal annuli, cf.\
\hyperref[fig:linkcommutation]{Figure \ref*{fig:linkcommutation}}.  
In a vertical commutation, we
choose two adjacent vertical annuli such that the zero-sphere obtained
by projecting the markings in one annulus to the middle $\be$-circle
is unlinked from the zero-sphere obtained by projecting the markings
in the other annulus to that $\be$-circle, and then interchange the
two vertical annuli, cf.\ \cite[Figure 5]{CMPOZSzDT}. Commutation
corresponds to the \hyperref[subsub:s3commutation]{$S^3$-grid move
  $(3)$}, and as shown in \cite[Lemma 3.1]{CMPOZSzDT}, the Alexander
grading shift is $0$.

\begin{figure}
\psfrag{x}{$\XX$}
\psfrag{o}{$\OO$}
\psfrag{a}{$\al_2$}
\psfrag{a'}{$\al_1$}
\begin{center}
\includegraphics[width=0.5\textwidth]{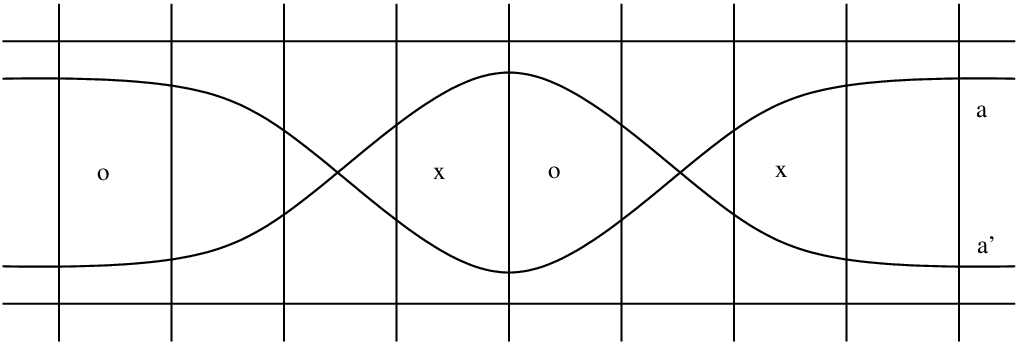}
\end{center}
\caption{A diagram representing a horizontal commutation. The
  link-grid diagram $\mf{L}_i$ is obtained from this diagram by
  deleting the the circle labeled $\al_i$.}\label{fig:linkcommutation}
\end{figure}

\subsubsection*{Link-grid move (3)}\label{subsub:linkdestab}
$\mf{L}_2$ is obtained from $\mf{L}_1$ by a \emph{destabilisation} or
vice-versa. In a destabilisation from $\mf{L}_1$ to $\mf{L}_2$, we
assume that $\mf{L}_1$ has exactly $(k+1)$ normal $O$-markings, and we
assume that there is a $2\times 2$ square $B$, three of whose squares
are occupied by $O_{k+1}$ and two $X$-markings. We then delete
$O_{k+1}$ and these two $X$-markings, and we put a new $X$-marking in
the other square of $B$. We then deformation retract the horizontal
annulus which contained $O_{k+1}$ to an $\al$-circle and deformation
retract the vertical annulus which contained $O_{k+1}$ to a
$\be$-circle to get the link-grid diagram $\mf{L}_2$. This move
corresponds to the \hyperref[subsub:s3normaldestab]{$S^3$-grid move
  $(4)$}, where we use the snail maps which are centered at the center
of $B$. As shown in \cite[Lemma 3.5]{CMPOZSzDT}, the Alexander grading
shift is $0$.

\subsubsection*{Link-grid move (4)}\label{subsub:linkbirth} $\mf{L}_2$
is obtained from $\mf{L}_1$ by a \emph{birth}, i.e.\ we assume that
$\mf{L}_2$ has exactly $(k+1)$ normal $O$-markings, with $O_{k+1}$
lying in the same square as some $X$-marking, and $\mf{L}_1$ is
obtained from $\mf{L}_2$ by deleting $O_{k+1}$ and that $X$-marking,
and then deformation retracting the horizontal and the vertical
annulus through that square to an $\al$-circle and a $\be$-circle,
respectively. This move corresponds to the
\hyperref[subsub:s3normaldestab]{$S^3$-grid move $(4)$} and it
represents a birth happening in a cobordism.

We will now show that the Alexander grading shift is $(-\frac{1}{2})$. Let us reuse the
notations from \hyperref[subsub:s3normaldestab]{$S^3$-grid move
  $(4)$}. There are two stabilisation maps $s_{11}$ and $s_{22}$ from
$\CF_{\mf{f}(\mf{L}_1)}$ to $\CF_{\mf{f}(\mf{L}_2)}$. We will only
deal with the map $s_{11}$; the map $s_{22}$ can be dealt with in a
similar fashion. Consider the map
$\wt{s}\colon\mc{G}_{\mf{f}(\mf{L}_2)}\rightarrow
\CF_{\mf{f}(\mf{L}_2)}$ defined as follows:
$$\wt{s}(x)=\sum_{y\in\mc{G}_{\mf{f}(\mf{L}_2)}}y\sum_{D\in\mc{S}_{33}(x,y,\al_1\cap\be_1)}\prod_{1\leq
  i\leq k} U_i^{n_i(D)}.$$
For any generator $x\in\mc{G}_{\mf{f}(\mf{L}_1)}$,
$s_{11}(x)=\wt{s}(F_{11}(x))$. The Alexander grading shift of the map
$\wt{s}$ is zero, since the Alexander grading shift induced by a
snail-like domain $D$ is the number of $O$'s minus the number of $X$'s
(both counted with multiplicities) that are contained in $D$; and
$O_{k+1}$ appears in $D$ the same number of times as the $X$-marking
that lies in the same square as $O_{k+1}$, and every other normal
$O$-marking $O_i$ appears with a cancelling factor of $U_i$,
cf.\ \cite[Proof of Lemma 3.5]{CMPOZSzDT}. 
Therefore, we only have to compute the Alexander grading shift of the
map $F_{11}\colon\mc{G}_{\mf{f}(\mf{L}_1)}\rightarrow \mc{G}_{\mf{f}(\mf{L}_2)}$.

Assume that the index of $\mf{L}_2$ is $(n+1)$. Let us cut up the
torus along $\al_1$, i.e.\ the $\al$-circle that lies just below
$O_{k+1}$, and $\be_2$, i.e.\ the $\be$-circle that lies just to the
right of $O_{k+1}$, to identify it with $[0,n+1)\times [0,n+1)$. The
$\al$-circles of $\mf{L}_2$ become the horizontal lines
$[0,n+1)\times\{i\}$ for $i\in\{0,\ldots,n\}$ and the $\be$-circles of
$\mf{L}_2$ become the vertical lines $\{i\}\times[0,n+1)$ for
$i\in\{0,\ldots,n\}$. To get from $\mf{L}_1$ to $\mf{L}_2$, we start
with the subsquare $[0,n)\times[1,n+1)$, we add the lowermost row
$[0,n+1)\times [0,1)$ and the rightmost column $[n,n+1)\times
[0,n+1)$, and we add an $X$-marking and the $O$-marking $O_{k+1}$ at the
bottom-right square $[n,n+1)\times[0,1)$. To get from
$x\in\mc{G}_{\mf{f}(\mf{L}_1)}$ to
$F_{11}(x)\in\mc{G}_{\mf{f}(\mf{L}_2)}$, we start with a formal sum of
$n$ points in $[0,n)\times [1,n+1)$ and we add the point $(n,0)$.

Recall that the Alexander grading of a generator $y$ is
$A(y)=\mc{J}(y,X)-\mc{J}(y,O)-\frac{1}{2}\mc{J}(X,X)+\frac{1}{2}\mc{J}(O,O)-\frac{n-1}{2}$. This
process increases each of the terms $\mc{J}(y,X)$, $\mc{J}(y,O)$ and
$\frac{n-1}{2}$ by $\frac{1}{2}$ and does not change the terms
$\mc{J}(X,X)$ and $\mc{J}(O,O)$. Therefore, the net Alexander grading
shift is $(-\frac{1}{2})$.

\subsubsection*{Link-grid move (5)}\label{subsub:linksaddle}
$\mf{L}_2$ is obtained from $\mf{L}_1$ by a \emph{saddle}, i.e.\ we
assume that $\mf{L}_1$ has a $2\times 2$ square $B$ with two
$X$-markings, one at the top-left and one at the bottom-right, and
$\mf{L}_2$ is obtained from $\mf{L}_1$ by deleting these two
$X$-markings and putting two new ones, one at the top-right and one at
the bottom-left of $B$. This move corresponds to the
\hyperref[subsub:s3identity]{$S^3$-grid move $(1)$}, and a direct
computation reveals that the Alexander grading shift is
$\frac{1}{2}$. This move represents a \hyperref[saddle]{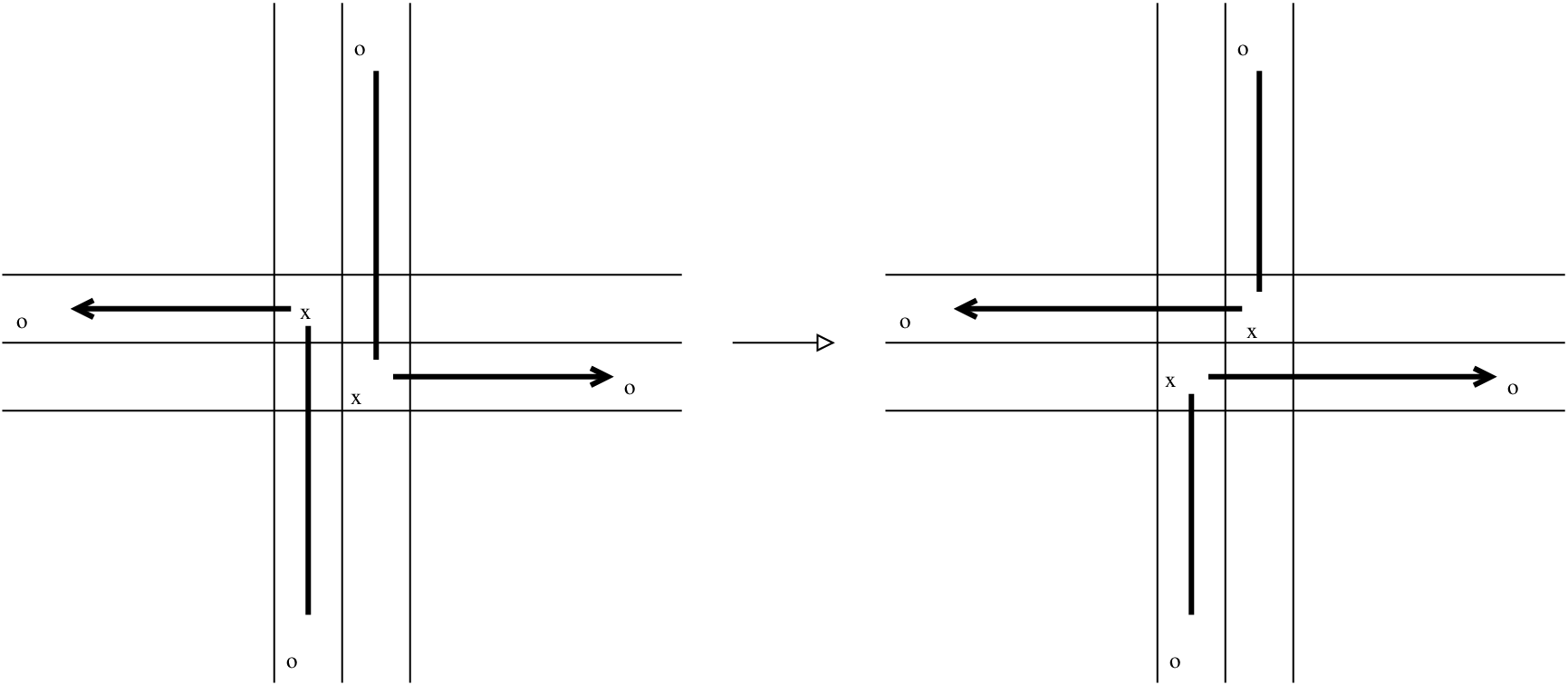}
happening in a cobordism, as illustrated in
\hyperref[fig:saddle]{Figure \ref*{fig:saddle}}. 

Conversely, given a saddle from a link $L_1$ to a link $L_2$, we can
choose link-grid diagrams $\mf{L}_i$ representing $L_i$ such that
$\mf{L}_2$ is obtained from $\mf{L}_1$ by a saddle move. Represent the
two strands of $L_1$ where the saddle takes place by the configuration
as shown in the first part of \hyperref[fig:saddle]{Figure
  \ref*{fig:saddle}}; extend this to a rectilinear approximation for
the rest of $L_1$; in the resulting diagram, if there is a crossing
where the vertical arc is the overpass, perform the local adjusment
from \cite[Figure 7]{gridPC} to rectify it. This produces the
link-grid diagram $\mf{L}_1$ for $L_1$. Doing the saddle move to
$\mf{L}_1$ produces the link-grid diagram $\mf{L}_2$ for $L_2$.

\begin{figure}
\psfrag{x}{$\XX$}
\psfrag{o}{$\OO$}
\begin{center}
\includegraphics[width=0.7\textwidth]{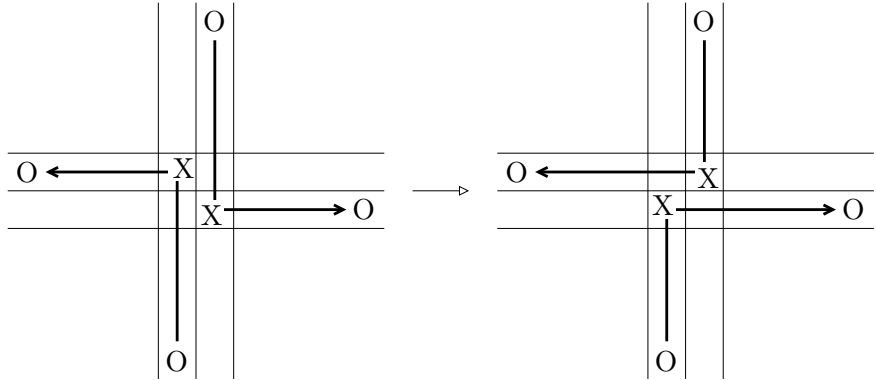}
\end{center}
\caption{The saddle move. The two link-grid diagrams $\mf{L}_1$ and
  $\mf{L}_2$ are shown, along with the oriented links that they
  represent (drawn with thick lines).}\label{fig:saddle}
\end{figure}

\subsubsection*{Link-grid move (6)}\label{subsub:linksplit} $\mf{L}_2$
is obtained from $\mf{L}_1$ by the following process: we assume that
$\mf{L}_1$ has exactly $(k+1)$ normal $O$-markings and a $2\times 2$
square $B$ with a special $O$-marking on the top-left and $O_{k+1}$ on
the bottom-right; $\mf{L}_2$ obtained from $\mf{L}_1$ by deleting
these two markings and adding two new special $O$-markings, one at the
top-right and one at the bottom-left of $B$. This corresponds to the
\hyperref[subsub:s3saddle]{$S^3$-grid move $(6)$}, and the Alexander
grading shift is $(-\frac{1}{2})$. This move also represents a saddle
in a cobordism (it will become apparent in the
\hyperref[mainproof]{proof of Theorem \ref*{thm:main}} why need two types of
saddle moves). Once again, it is easy to see that any saddle can be
represented by such a link-grid move.  Furthermore, if the saddle is a
\hyperref[split]{split}, then $\mf{L}_1$ is \hyperref[tight]{tight} if
and only if $\mf{L}_2$ is tight.


\subsubsection*{Link-grid move (7)}\label{subsub:linkdeath} $\mf{L}_2$ is obtained from $\mf{L}_1$ by a \emph{death}, i.e.\  there is some special $O$-marking in $\mf{L}_1$ such that the square immediately to the top-right of it contains an $X$-marking and a special $O$-marking, and $\mf{L}_2$ is obtained from $\mf{L}_1$ by deleting those two markings, and then deformation retracting the horizontal and vertical annulus through that square to an $\al$-circle and a $\be$-circle, respectively. This move corresponds to the \hyperref[subsub:s3specialdestab]{$S^3$-grid move $(5)$}. By direct computation, we see that the Alexander grading shift is $\frac{1}{2}$. This move represents a \hyperref[death]{death} happening in a cobordism.

\section{Main Theorem}

In this section, we will prove our main theorem, \hyperref[thm:main]{Theorem \ref*{thm:main}}.

\begin{lem}\label{lem:cromwell}
  If $\mf{L}_1$ and $\mf{L}_2$ are two link-grid diagrams representing   isotopic links, and if every link component in $\mf{L}_1$ contains   at most one special $O$-marking, and if the corresponding link   component in $\mf{L}_2$ contains the same number of special   $O$-markings, then there is a sequence of link-grid moves of Types   \hyperref[subsub:linkrenumber]{$(1)$},   \hyperref[subsub:linkcommute]{$(2)$} and   \hyperref[subsub:linkdestab]{$(3)$}, which take $\mf{L}_1$ to   $\mf{L}_2$.
\end{lem}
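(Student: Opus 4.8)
The plan is to deduce this from the classical Cromwell theorem for grid diagrams and then to upgrade the resulting sequence of moves so that it becomes compatible with the extra bookkeeping of special $O$-markings. First I would forget which $O$-markings are special and which are normal, so that $\mf{L}_1$ and $\mf{L}_2$ become ordinary grid diagrams representing isotopic links. The Cromwell theorem, in the grid formulation used for Lemma~\ref{lem:cromwelleasy} (see \cite{gridPC}), then supplies a finite sequence of grid diagrams connecting $\mf{L}_1$ to $\mf{L}_2$ in which consecutive diagrams differ by a commutation or a (de)stabilisation. Using the standard manipulations that express each stabilisation type in terms of the single type realised by move $(3)$ together with some extra commutations, I would first arrange that every (de)stabilisation in the sequence is performed at a \emph{normal} $O$-marking inside a $2\times 2$ square containing that $O$-marking and two $X$-markings. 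At this stage every elementary step is either a commutation, i.e.\ move $(2)$, or a normal $O$-(de)stabilisation, i.e.\ move $(3)$.

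The real content is to carry the special $O$-markings through this sequence without ever performing a forbidden operation on them, since moves $(1)$, $(2)$ and $(3)$ neither create, destroy, nor alter the specialness of any $O$-marking. Here I would exploit the hypothesis that each component of $\mf{L}_1$ carries \emph{at most one} special $O$-marking: because a single marking behaves like a point, whenever the Cromwell sequence calls for a commutation or a destabilisation whose $2\times 2$ square meets a special marking, I would first use commutations to slide that special marking out of the affected rows and columns, perform the normal operation on the surrounding markings, and then commute the special marking back. Since commutations leave the set of special markings untouched and a lone special marking can always be maneuvered past unlinked strands, this reproduces the entire simplification while keeping the special markings frozen as data. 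The matching hypothesis, that corresponding components of $\mf{L}_1$ and $\mf{L}_2$ carry equally many special markings, then guarantees that the special markings transported through the sequence land in positions consistent with those already present in $\mf{L}_2$.

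Finally I would reconcile the two diagrams exactly. After the previous step, $\mf{L}_1$ has been carried to a diagram that agrees with $\mf{L}_2$ as an unmarked grid, with special markings matched component by component; a renumbering of the normal $O$-markings, i.e.\ move $(1)$, then makes the labels $O_1,\dots,O_k$ coincide, completing the sequence. The hard part is the second paragraph: one must verify that the ``protect the special marking by commuting it aside'' step can always be carried out, i.e.\ that a single special $O$-marking never obstructs a needed normal (de)stabilisation and can always be returned to its intended place. This is precisely where the hypothesis of at most one special $O$-marking per component is essential, since it prevents two special markings from having to cross a single operation simultaneously and keeps each special marking point-like throughout.
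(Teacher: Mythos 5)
Your first paragraph and final renumbering step follow the paper's opening strategy (run Cromwell's theorem treating all $O$-markings as equal, then repair the discrepancies using moves $(1)$--$(3)$), but the mechanism you propose in your second paragraph for handling the special $O$-markings does not work, and that is exactly where the real content of the lemma lies. The difficulty is not that a special marking is \emph{spatially} in the way of some operation: it is that Cromwell's sequence may demand a destabilisation that \emph{deletes} a marking which happens to be special, whereas link-grid move $(3)$ only permits deleting the normal marking $O_{k+1}$ (so one also needs a renumbering before each destabilisation, not only at the end). ``Sliding the special marking out of the affected rows and columns and then commuting it back'' cannot address this: a commutation interchanges two entire adjacent annuli, so the marking travels together with its whole row or column and the link strand it encodes; it is never detached from the configuration around it, and once you commute it back you face the same forbidden destabilisation as before. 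Since every move in your scheme literally preserves the special/normal designation of every surviving marking, and a special marking can never be deleted by moves $(1)$--$(3)$, no such composition can make the marking sitting at the deletion site normal.

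What is missing is the paper's key construction: a short sequence of moves of Types $(1)$, $(2)$, $(3)$ --- a stabilisation next to the special marking, commutations, a renumbering, and a destabilisation at the \emph{next} $O$-marking along the oriented component (which is necessarily normal, by the at-most-one hypothesis) --- whose net effect is to leave the diagram unchanged except that the special designation has moved one step forward along the component; this is Figure \ref{fig:pointedsequence}. With this primitive one handles a destabilisation at a formerly special site by first shifting the specialness off it. Equally important, and also absent from your argument, is the endgame: your claim that the matching hypothesis ``guarantees that the special markings land in positions consistent with those already present in $\mf{L}_2$'' is unfounded. Equal \emph{numbers} of special markings per component do not give equal \emph{positions} along the component; since none of your moves ever transfers specialness between markings, and renumbering (move $(1)$) only permutes the normal markings, your sequence can terminate at a diagram agreeing with $\mf{L}_2$ except for which $O$-marking on a component is special, with no way to finish. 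Walking the special marking around the component with the transfer primitive is precisely how the paper closes both gaps.
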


\begin{proof}
  This is a small extension of Cromwell's Theorem \cite[Section
  2]{gridPC}. Cromwell's theorem states that the above is true if all
  the $O$-markings are treated as equal. Therefore, we can simply take
  the sequence of link-grid moves as given by Cromwell, and apply
  them. However, we can run into the following four types of problems.
\begin{enumerate}
\item  We might have to destabilise at a special $O$-marking.
\item We might have to destabilise at a normal $O$-marking which is
  not the highest numbered one.
\item In the final link-grid diagram and in
  $\mf{L}_2$, the special $O$-markings could be at different places.
\item The normal $O$-markings could be numbered differently in
  the final link-grid diagram and in $\mf{L}_2$.
\end{enumerate}

\begin{figure}
\psfrag{x}[][][0.75]{$\XX$}
\psfrag{o}[][][0.75]{$\OO_k$}
\psfrag{oi}[][][0.75]{$\OO_i$}
\psfrag{os}{$\varnothing$}
\psfrag{1}{$(1)$}
\psfrag{21}{$(2)^{m_1}$}
\psfrag{22}{$(2)^{m_2}$}
\psfrag{23}{$(2)^{m_3}$}
\psfrag{24}{$(2)^{m_4}$}
\psfrag{3}{$(3)$}
\begin{center}
\includegraphics[width=\textwidth]{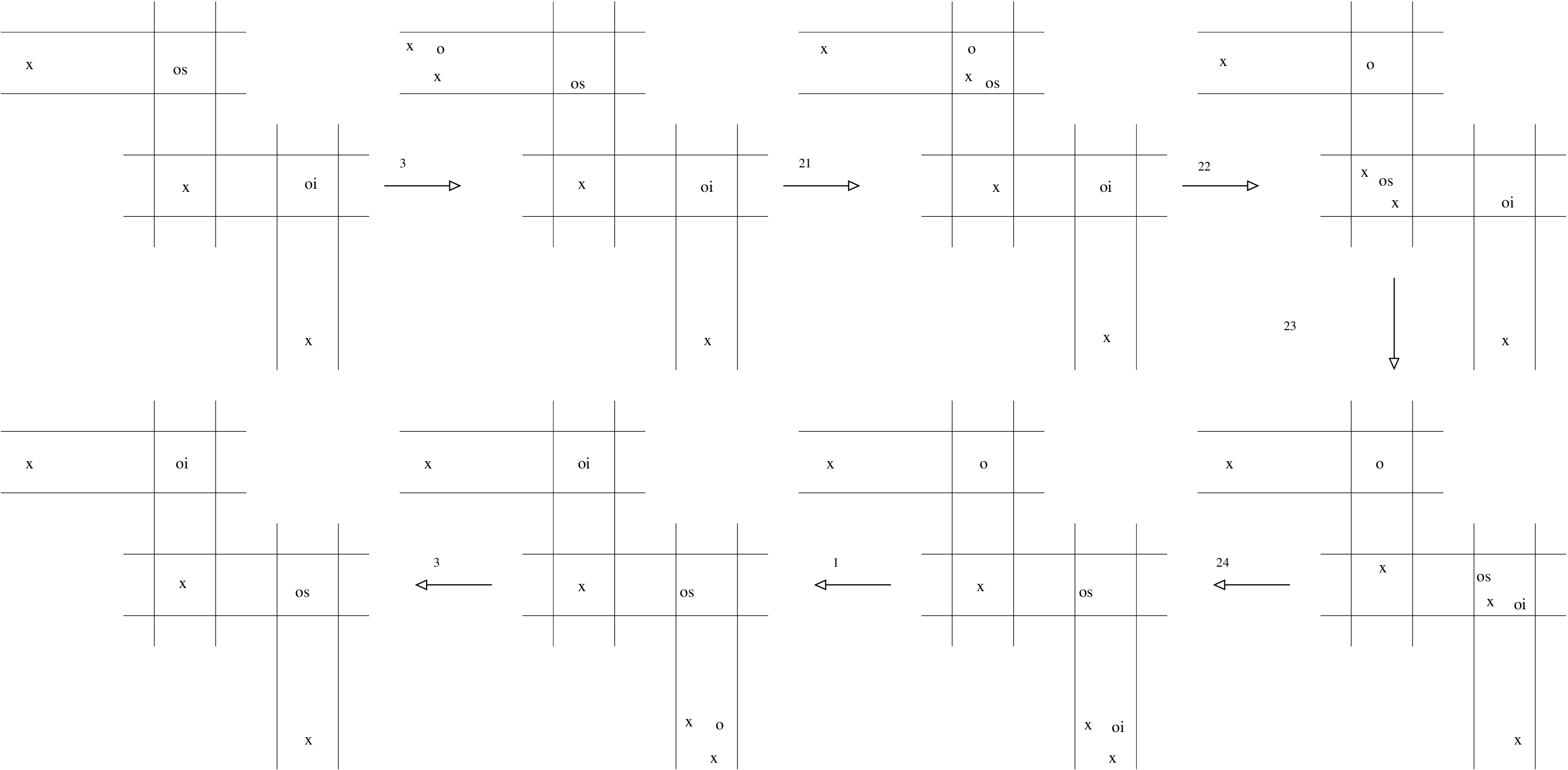}
\end{center}
\caption{A sequence of link-grid moves.}\label{fig:pointedsequence}
\end{figure}

The \hyperref[subsub:linkrenumber]{link-grid move $(1)$},
i.e.\ renumbering the normal $O$-markings, fixes two of these problems,
namely the second and the fourth one. To fix the other problems, we
only need a sequence of link-grid moves of Types
\hyperref[subsub:linkrenumber]{$(1)$},
\hyperref[subsub:linkcommute]{$(2)$} and
\hyperref[subsub:linkdestab]{$(3)$}, which achieves the following:
given a link-grid diagram $\mf{L}$ where every link component has at
most one special $O$-marking, and given a special $O$-marking, we can
convert that special $O$-marking to a normal $O$-marking, and convert
the next $O$-marking in that oriented link component to a special
$O$-marking. Assuming that there are exactly $(k-1)$ normal
$O$-markings in $\mf{L}$, such a sequence of moves is shown in
\hyperref[fig:pointedsequence]{Figure \ref*{fig:pointedsequence}}.
\end{proof}

\begin{thm}\label{thm:gridmoves}
  If $\mf{L}_1$ and $\mf{L}_2$ are two \hyperref[tight]{tight}   link-grid diagrams representing knots $K_1$ and $K_2$, respectively,   and if there is a connected knot cobordism from $K_1$ to $K_2$ with   exactly $b$ births, $s$ saddles and $d$ deaths, then there is a   sequence of link-grid moves taking $\mf{L}_1$ to $\mf{L}_2$, such   that there are exactly $b$ link-grid moves of Type \hyperref[subsub:linkbirth]{$(4)$}, $s-d$   link-grid moves of Type \hyperref[subsub:linksaddle]{$(5)$}, $d$ link-grid moves of Type \hyperref[subsub:linksplit]{$(6)$} and   $d$ link-grid moves of Type \hyperref[subsub:linkdeath]{$(7)$}, and these link-grid moves happen   in this order.
\end{thm}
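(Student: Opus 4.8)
The plan is to combine the normal form for cobordisms from \hyperref[lem:isotopy]{Lemma \ref*{lem:isotopy}} with Cromwell-type moves from \hyperref[lem:cromwell]{Lemma \ref*{lem:cromwell}} and the dictionary between cobordism events and link-grid moves established in the previous section. First I would invoke \hyperref[lem:isotopy]{Lemma \ref*{lem:isotopy}} to isotope the cobordism $S$ so that all $b$ births occur at time $\frac{1}{4}$, all $s$ saddles at time $\frac{1}{2}$, and all $d$ deaths at time $\frac{3}{4}$, with product cobordisms on $[0,\frac{1}{4})$ and $(\frac{3}{4},1]$. This stratifies the movie into five blocks: an initial isotopy, a simultaneous collection of births, an isotopy, a simultaneous collection of saddles, an isotopy, a collection of deaths, and a final isotopy. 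The strategy is to realise each block by link-grid moves of the prescribed type, reading the movie from $K_1$ to $K_2$.

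\emph{Births and the isotopy blocks.} A single birth creates a small unknot component; via \hyperref[subsub:linkbirth]{link-grid move $(4)$} this is realised by introducing a $2\times 2$ square carrying $O_{k+1}$ together with a coincident $X$-marking (the reverse of a destabilisation), so the $b$ births contribute exactly $b$ moves of Type $(4)$. The intervening pure-isotopy blocks are handled by \hyperref[lem:cromwell]{Lemma \ref*{lem:cromwell}}: at each such block the two links are isotopic and the number of special $O$-markings on each component is unchanged, so the block is realised by a sequence of moves of Types \hyperref[subsub:linkrenumber]{$(1)$}, \hyperref[subsub:linkcommute]{$(2)$} and \hyperref[subsub:linkdestab]{$(3)$}, none of which appear in the count being asserted. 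The key point is that after performing the births I must arrange, using these Cromwell moves, that just before each saddle the relevant two strands sit in the standard local configuration of \hyperref[fig:saddle]{Figure \ref*{fig:saddle}} (for a merge) or \hyperref[subsub:linksplit]{move $(6)$} (for a split), exactly as described in the converse direction of \hyperref[subsub:linksaddle]{link-grid move $(5)$}.

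\emph{Saddles, splits and deaths.} The genuinely structural choice is how to encode the saddles, and this is where the split-versus-merge distinction and the parenthetical remark in \hyperref[subsub:linksplit]{move $(6)$} come in. Since $K_1$ and $K_2$ are knots, the first and last stills are connected; the $s$ saddles consist of some merges and some splits, and because we begin and end with a single component the number of splits must be compensated by an equal number of merges that reconnect the pieces --- but the deaths remove components, forcing exactly $d$ of the saddles to be splits that detach the components which will subsequently die. I would realise these $d$ splits by moves of Type \hyperref[subsub:linksplit]{$(6)$}, which preserve tightness, and each resulting detached component is then killed by a \hyperref[subsub:linkdeath]{link-grid move $(7)$} death; this gives $d$ moves of Type $(6)$ and $d$ of Type $(7)$. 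The remaining $s-d$ saddles are realised by moves of Type \hyperref[subsub:linksaddle]{$(5)$}, yielding the asserted count. Finally the block ordering in \hyperref[lem:isotopy]{Lemma \ref*{lem:isotopy}} guarantees births precede saddles precede splits precede deaths.

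The main obstacle I anticipate is \textbf{bookkeeping of tightness and of special $O$-markings through the saddle block}. A generic saddle move of Type $(5)$ does not obviously preserve the condition that each component carries exactly one special $O$-marking, whereas \hyperref[lem:cromwell]{Lemma \ref*{lem:cromwell}} only lets me re-shuffle special markings between pure-isotopy blocks provided each component has \emph{at most} one. The delicate part is to check that after each merge or split the intermediate link-grid diagram still has at most one special $O$-marking per component, so that Cromwell's lemma remains applicable to normalise the diagram before the next event; this is precisely why the two distinct saddle moves $(5)$ and $(6)$ are needed, a merge being handled by the tightness-agnostic move $(5)$ while a split uses the tightness-preserving move $(6)$ so that the component destined to die is born already carrying its own special marking ready for move $(7)$. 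Verifying that these local models can always be installed compatibly, via Types $(1)$--$(3)$, at the boundary between consecutive events is the crux of the argument.
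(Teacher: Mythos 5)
Your skeleton---normal form from \hyperref[lem:isotopy]{Lemma \ref*{lem:isotopy}}, Cromwell moves of Types \hyperref[subsub:linkrenumber]{$(1)$}--\hyperref[subsub:linkdestab]{$(3)$} between events, and the dictionary birth/saddle/death $\leftrightarrow$ moves \hyperref[subsub:linkbirth]{$(4)$}--\hyperref[subsub:linkdeath]{$(7)$}---is the same as the paper's, but your treatment of the saddles has a genuine gap. You claim that connectedness ``forces exactly $d$ of the saddles to be splits'' and you assign Type \hyperref[subsub:linksaddle]{$(5)$} to merges and Type \hyperref[subsub:linksplit]{$(6)$} to splits. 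Neither part survives scrutiny. Counting components (births add $b$, deaths remove $d$, a merge removes one, a split adds one) gives $(\#\text{splits})-(\#\text{merges})=d-b$, so the number of splits is $\frac{1}{2}(s+d-b)$, not $d$: a connected genus-one cobordism from a knot to itself with $b=d=0$ has $s=2$ saddles, one split and one merge, yet the theorem demands two moves of Type $(5)$ and none of Type $(6)$. Your scheme would instead produce one move of each, in the wrong order; worse, the Type $(6)$ split followed by a Type $(5)$ merge ends in a diagram whose single component carries \emph{two} special $O$-markings, which can never equal the tight diagram $\mf{L}_2$. More fundamentally, after \hyperref[lem:isotopy]{Lemma \ref*{lem:isotopy}} all $s$ saddles are disjoint bands attached simultaneously at time $\frac{1}{2}$, so ``merge'' versus ``split'' is not intrinsic to a band at all---it depends on the order in which one chooses to perform them, and your proposal never makes that choice.

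The missing idea, which is exactly what the paper's proof supplies, is to exploit this freedom of ordering: since the cobordism is connected, the $s$ saddles can be ordered so that the still just after the $(b+s-d)$-th critical point is a \emph{knot}; the final $d$ saddles are then automatically splits, each detaching one of the $d$ components destined to die. The first $s-d$ saddles---merges and splits alike---are all realised by Type \hyperref[subsub:linksaddle]{$(5)$} moves; this is harmless for the bookkeeping you worry about, because Type $(4)$ births introduce only normal $O$-markings and Type $(5)$ moves do not touch $O$-markings, so every intermediate diagram has at most one special $O$-marking per component and \hyperref[lem:cromwell]{Lemma \ref*{lem:cromwell}} stays applicable. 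At the end of that block the unique component carries the unique special $O$-marking, i.e.\ the diagram is tight, and only then are the last $d$ splits performed as Type \hyperref[subsub:linksplit]{$(6)$} moves, whose tightness-preservation guarantees that each split-off component carries a special $O$-marking ready for its Type \hyperref[subsub:linkdeath]{$(7)$} death. It is this reordering argument, not a merge/split dichotomy, that yields the counts $s-d$ and $d$ and the order asserted in the theorem.
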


\begin{proof}
  Using \hyperref[lem:isotopy]{Lemma \ref*{lem:isotopy}}, we can assume that all the births   happen at time $t=\frac{1}{4}$, all the saddles happen at time   $t=\frac{1}{2}$, and all the deaths happen at time   $t=\frac{3}{4}$. By isotoping the cobordism slightly, we can ensure   that the critical points happen at distinct instants of time, and we   can choose the order of the $b$ births, the order of the $s$ saddles   and the order of the $d$ deaths. Stated differently, given   $t_1<\cdots <t_b$ near $\frac{1}{4}$, $t_{b+1}<\cdots <t_{b+s}$ near   $\frac{1}{2}$, and $t_{b+s+1}<\cdots <t_{b+s+d}$ near $\frac{3}{4}$,   and given an ordering of the $b$ births, an ordering of the $s$   saddles and an ordering of the $d$ deaths, we can isotope the   cobordism slighly to ensure that that the $i$-th birth happens at   time $t=t_{i}$, the $i$-th saddle happens at time $t=t_{b+i}$ and   the $i$-th death happens at time $t=t_{b+s+i}$. Since the cobordism   is connected, we order the $s$ saddles in some way so as to   guarantee that the final $d$ saddles are all \hyperref[split]{splits}. In other words,   we ensure that the link in the still\phantomsection\label{linkisknot}, just after time $t=t_{b+s-d}$,   is a knot. A schematic picture of a cobordism, put in this standard   form\phantomsection\label{standard}, is shown in \hyperref[fig:example]{Figure \ref*{fig:example}}.

\begin{figure}
\psfrag{0}{$0$}
\psfrag{1}{$t_1$}
\psfrag{2}{$t_2$}
\psfrag{3}{$t_3$}
\psfrag{4}{$t_4$}
\psfrag{5}{$t_5$}
\psfrag{6}{$t_6$}
\psfrag{7}{$t_7$}
\psfrag{8}{$t_8$}
\psfrag{9}{$t_9$}
\psfrag{10}{$t_{10}$}
\psfrag{11}{$1$}
\begin{center}
\includegraphics[width=0.9\textwidth]{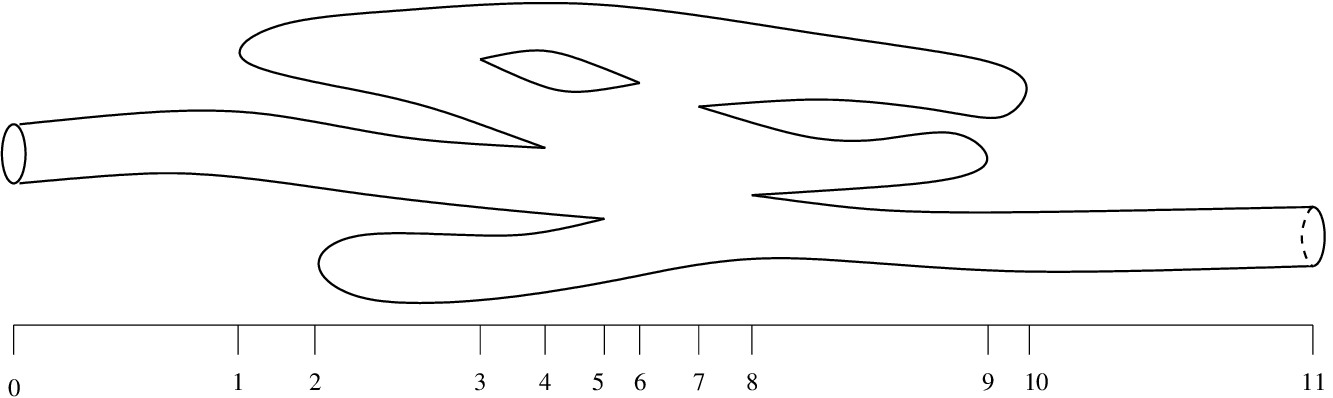}
\end{center}
\caption{A cobordism with $2$ births, $6$ saddles and     $2$ deaths, put in the standard form.}\label{fig:example}
\end{figure}

Let $\mf{L}^a_0=\mf{L}_1$ and let $\mf{L}^b_{b+s+d+1}=\mf{L}_2$.  For
each $1\leq i\leq b+s+d$, choose two link-grid diagrams $\mf{L}^b_i$
and $\mf{L}^a_i$, such that: $\mf{L}^b_i$ represents the link just
before time $t_i$; $\mf{L}^a_i$ represents the link just after time
$t_i$; and $\mf{L}^a_i$ can be obtained from $\mf{L}^b_i$ by a
link-grid move of Type $(4)$, $(5)$, $(6)$ or $(7)$, depending on
whether $1\leq i\leq b$, $b+1\leq i\leq b+s-d$, $b+s-d+1\leq i\leq
b+s$ or $b+s+1\leq i\leq b+s+d$, respectively. Observe that for each
$0\leq i\leq b+s+d$, the two link-grid diagrams $\mf{L}^a_i$ and
$\mf{L}^b_{i+1}$ represent isotopic links; it is easy to see that
while choosing $\mf{L}^a_i$ and $\mf{L}^b_{i+1}$, we can ensure that the
corresponding link components contain the same number of special
$O$-markings.  Therefore, by \hyperref[lem:cromwell]{Lemma
  \ref*{lem:cromwell}}, we can convert $\mf{L}^a_i$ to
$\mf{L}^b_{i+1}$ by a sequence of link-grid moves of Types
$(1)$-$(3)$. Putting everything together, we get the required sequence
of link-grid moves that converts $\mf{L}_1$ to $\mf{L}_2$.
\end{proof}

\begin{proof}[Proof of Theorem \ref*{thm:main}]\phantomsection\label{mainproof}
  In order to prove this, we only need to show the following: if
  $\mf{L}_1$ and $\mf{L}_2$ are two tight link-grid diagrams
  representing knots $K_1$ and $K_2$, respectively, and if there is a
  connected knot cobordism of genus $g$ from $K_1$ to $K_2$, then
  $\left|\tau_{\mf{L}_1}-\tau_{\mf{L}_2}\right|\leq g$.

  Let us now assume that the cobordism from $K_1$ to $K_2$ has $b$
  births, $d$ deaths and $2g+b+d$
  saddles. \hyperref[thm:gridmoves]{Theorem \ref*{thm:gridmoves}}
  tells us that there is a sequence of link-grid moves of Types
  $(1)$-$(7)$ taking $\mf{L}_1$ to $\mf{L}_2$, such that there are
  exactly $b$ link-grid moves of Type $(4)$, $2g+b$ link-grid moves of
  Type $(5)$, $d$ link-grid moves of Type $(6)$ and $d$ link-grid
  moves of Type $(7)$, and these link-grid moves happen in this order.

  For link-grid moves of Types $(1)$-$(5)$, the associated maps on   $\CF$ preserve Maslov gradings and are quasi-isomorphisms. For \hyperref[subsub:linksplit]{link-grid move $(6)$}, the map drops Maslov grading by $1$,   and is injective at the level of homology. The \hyperref[smallestgrading]{smallest Maslov grading} also drops by $1$, and by  \hyperref[thm:basicproperty]{Theorem \ref*{thm:basicproperty}}, the homology of $\CF$ in the smallest  Maslov grading is $\F_2$. Therefore, the map on   homology in the smallest Maslov grading is an isomorphism.   Similarly, for \hyperref[subsub:linkdeath]{link-grid move $(7)$}, the map increases   Maslov grading by $1$, and is surjective at the level of homology.   However, the smallest Maslov grading also increases by $1$;   therefore, the map on homology in the smallest Maslov grading is   also an isomorphism. Thus, the composed maps from   $\CF_{\mf{f}(\mf{L}_1)}$ to $\CF_{\mf{f}(\mf{L}_2)}$ preserves Maslov   grading and is a quasi-isomorphism in the smallest Maslov grading. However, each of the link-grid diagrams $\mf{L}_1$ and $\mf{L}_2$ contains exactly one special $O$-marking, so the smallest Maslov grading is the only Maslov grading in which the homology is supported. Therefore, the composed map is a quasi-isomorphism.

There are no Alexander grading shifts for link-grid moves \hyperref[subsub:linkrenumber]{$(1)$}, \hyperref[subsub:linkcommute]{$(2)$}
and \hyperref[subsub:linkdestab]{$(3)$}, there is an Alexander grading shift of $\frac{1}{2}$ for
link-grid moves \hyperref[subsub:linksaddle]{$(5)$} and \hyperref[subsub:linkdeath]{$(7)$}, and there is an Alexander grading
shift of $(-\frac{1}{2})$ for link-grid moves \hyperref[subsub:linkbirth]{$(4)$} and
\hyperref[subsub:linksplit]{$(6)$}. Therefore, the net Alexander grading shift is
$-\frac{b}{2}+\frac{2g+b}{2}-\frac{d}{2}+\frac{d}{2}=g$. Therefore,
for every $a\in\frac{1}{2}\Z$, we have the following commuting square.
$$\begin{tikzpicture}
\matrix(m)[matrix of math nodes,
row sep=3.5em, column sep=3em,
text height=2ex, text depth=0.25ex]
{H_*(\mc{F}_{\mf{L}_1}(a))&H_*(\mc{F}_{\mf{L}_2}(a+g))\\
H_*(\CF_{\mf{f}(\mf{L}_1)})=\F_2& H_*(\CF_{\mf{f}(\mf{L}_2)})=\F_2\\};
\path[->](m-1-1) edge (m-2-1);
\path[->](m-1-2) edge (m-2-2);
\path[->](m-1-1) edge (m-1-2);
\path[->](m-2-1) 
edge node[above]{$\Id$} (m-2-2);
\end{tikzpicture}$$

Substituting $a=\tau_{\mf{L}_1}$ in the above commutative diagram, we see that the map $H_*(\mc{F}_{\mf{L}_2}(\tau_{\mf{L}_1}+g))\rightarrow H_*(\CF_{\mf{f}(\mf{L}_2)})$ is non-trivial; therefore, $\tau_{\mf{L}_2}\leq\tau_{\mf{L}_1}+g$.

However, we can view the cobordism in reverse, i.e.\ we can run the movie backwards, to get a connected genus $g$ cobordism from $K_2$ to $K_1$. That would show that $\tau_{\mf{L}_1}\leq\tau_{\mf{L}_2}+g$. Combining the two inequalities, we get our desired result $\left|\tau_{\mf{L}_1}-\tau_{\mf{L}_2}\right|\leq g$.
\end{proof}

\section{Applications}

Given any $S^3$-grid diagram $\mf{G}$, define the \emph{special
  generator} $x_{\mf{G}}\in\mc{G}_{\mf{G}}$ to be the generator whose
coordinates lie at the top-left corners of the squares that contain
the $O$-markings. It is an easy computation to show that the Maslov
grading of $x_{\mf{G}}$ is always zero. 

Let $\mf{G}_n$ be the following index-$n$ $S^3$-grid diagram: there is
exactly one special $O$-marking; the square containing $O_{n-1}$ lies
immediately to the top-left of the square containing the special
$O$-marking; and for all $1\leq i\leq n-2$, the square containing
$O_i$ lies immediately to the top-left of the square containing
$O_{i+1}$. For this grid diagram, the special generator $x_{\mf{G}_n}$
also has coordinates at the bottom-right corners of the squares that
contain the $O$-markings. 

\begin{lem}
The $\F_2$-module generated by $x_{\mf{G}_n}$ is a direct summand of
$\CF_{\mf{G}_n}$.
\end{lem}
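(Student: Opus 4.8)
The plan is to exhibit $\CF_{\mf{G}_n}$ as a direct sum of subcomplexes $\CF_{\mf{G}_n}=\F_2\cdot x_{\mf{G}_n}\oplus C'$, where $C'$ is the $\F_2$-span of every basis monomial $\prod_i U_i^{m_i}x$ \emph{other} than the bare generator $x_{\mf{G}_n}$ itself. As $\F_2$-vector spaces this splitting is tautological, so all the content is that both summands are closed under $\del$. Since $\del$ drops Maslov grading and $M(x_{\mf{G}_n})=0$ (established just before the lemma), $\F_2\cdot x_{\mf{G}_n}$ is a subcomplex exactly when $\del x_{\mf{G}_n}=0$; call this condition (1). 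And $C'$ is a subcomplex exactly when the bare monomial $x_{\mf{G}_n}$ never occurs in $\del y$ for a generator $y\neq x_{\mf{G}_n}$; unwinding the differential, this says there is no rectangle $R\in\mc{R}_{\mf{G}_n}(y,x_{\mf{G}_n})$ disjoint from every $O$-marking (a rectangle already avoids the special $O$, and any normal $O_i$ inside it forces a nontrivial factor $U_i$). Call this condition (2). It therefore suffices to verify (1) and (2).

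To set up both verifications I would cut the torus along an $\al$- and a $\be$-circle, identifying it with $\{0,\dots,n-1\}^2$, so that the staircase places the $n$ $O$-markings precisely in the squares $(\ga,\de)$ with $\ga+\de\equiv n-1\pmod n$, while the coordinates of $x_{\mf{G}_n}$—being the common top-left/bottom-right corners of consecutive $O$-squares—occupy the lattice points $(a,(-a)\bmod n)$, i.e.\ the anti-diagonal $\ga+\de\equiv 0\pmod n$. The entire argument rests on the disjointness of these two residue classes mod $n$.

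For condition (2): any rectangle ending at $x_{\mf{G}_n}$ has a coordinate of $x_{\mf{G}_n}$ at its bottom-right corner, say $(a,(-a)\bmod n)$. Being nondegenerate, the rectangle contains the unit square immediately above and to the left of that corner, namely the square in column $a-1$ and row $(-a)\bmod n$; this square satisfies $\ga+\de\equiv n-1$ and hence carries an $O$-marking. So every rectangle into $x_{\mf{G}_n}$ either meets the special $O$ (and is not a rectangle) or meets a normal $O_i$ (and acquires a factor $U_i$), and in neither case does the bare $x_{\mf{G}_n}$ appear; this is (2). For condition (1), fix a pair of columns $\{a,b\}$: the two circles $\be_a,\be_b$ together with the two rows through the corresponding coordinates cut the torus into four rectangles, of which exactly two, a complementary pair $R,R'$, carry $x_{\mf{G}_n}$-coordinates at their top-right and bottom-left corners, and both send $x_{\mf{G}_n}$ to the \emph{same} generator $y$ (the one with those two coordinates transposed). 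A residue count of the same flavor shows that, because the $x$-coordinates sit on $\ga+\de\equiv 0$, neither $R$ nor $R'$ contains another $x$-coordinate in its interior (both are empty) and neither contains any $O$-marking (both have trivial $U$-weight). Thus $R$ and $R'$ together contribute $y+y=0$; since distinct column pairs yield distinct targets, summing gives $\del x_{\mf{G}_n}=0$.

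The only genuine care required—and the step I expect to be the main, though routine, obstacle—is the toroidal bookkeeping inside those two residue counts: one must pin down, with the wrap-around correctly accounted for, exactly which unit squares and which $x$-coordinates lie in the interior of a given rectangle. Once the two anti-diagonals $\ga+\de\equiv 0$ and $\ga+\de\equiv n-1$ are made explicit this becomes mechanical, and nothing else is needed, since $M(x_{\mf{G}_n})=0$ and the description of $\mf{G}_n$ are already in hand.
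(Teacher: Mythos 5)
Your proof is correct and takes essentially the same route as the paper: the paper's argument shows that $\F_2\cdot x_{\mf{G}_n}$ is simultaneously a quotient complex (every rectangle into $x_{\mf{G}_n}$ must contain an $O$-marking, hence is forbidden or carries a $U$-power) and a subcomplex ($\del x_{\mf{G}_n}=0$ because for each target generator the two empty rectangles cancel mod $2$), which are precisely your conditions (2) and (1). The only cosmetic differences are that you phrase the quotient-complex half as closure of the complementary span $C'$ under $\del$, and that you spell out the mod-$n$ anti-diagonal residue count which the paper leaves as ``it is clear.''
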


\begin{proof}
For any $S^3$-grid diagram $\mf{G}$, the $\F_2$-module generated by
$x_{\mf{G}}$ is a quotient complex of $\CF_{\mf{G}}$. This is because
any rectangle that joins some other generator to $x_{\mf{G}}$ must
pass through some $O$-marking. However, rectangles are not allowed to
pass through the special $O$-markings, and whenever they pass through
the normal $O$-markings, they pick up a $U$-power.

For the grid diagram $\mf{G}_n$, we would like to show that the $\F_2$-module generated by $x_{\mf{G}_n}$ is also a subcomplex. Let $y\in\mc{G}_{\mf{G}_n}$ be some generator that differs from $x_{\mf{G}_n}$ in exactly two coordinates. There are exactly two embedded rectangles $R_1$ and $R_2$, with boundary lying in $\al\cup\be$, whose top-right and bottom-left corners are $x_{\mf{G}_n}$-coordinates, and whose top-left and bottom-right corners are $y$-coordinates. It is clear that none of these rectangles contain any $O$-markings or any $x_{\mf{G}_n}$-coordinates in their interiors.  Therefore, $\del x_{\mf{G}_n}=0$, thereby concluding the proof.
\end{proof}

\begin{lem}\label{lem:taualex}
Let $\mf{L}$ be an index-$n$ link-grid diagram that represents a knot
$K$, such that $\mf{f}(\mf{L})=\mf{G}_n$. Then $\tau(K)=A(x_{\mf{G}_n})$.
\end{lem}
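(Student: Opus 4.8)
The plan is to identify $\tau(K)$ with a filtration level of $\CF_{\mf{G}_n}$ and then pin that level down using the direct summand $\F_2\,x_{\mf{G}_n}$ supplied by the previous lemma. First I would note that $\mf{L}$ is automatically \hyperref[tight]{tight}: it represents a knot, hence has a single link component, and $\mf{f}(\mf{L})=\mf{G}_n$ has exactly one special $O$-marking, so that lone component contains exactly one special $O$-marking. Consequently the theorem asserting $\tau(K)=\tau_{\mf{L}}$ for tight diagrams applies, and it suffices to prove $\tau_{\mf{L}}=A(x_{\mf{G}_n})$.

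Next I would compute the homology of $\CF_{\mf{G}_n}$. Since $\mf{G}_n$ is index $n$ with exactly one special $O$-marking, it has $k=n-1$ normal $O$-markings, so its smallest Maslov grading is $n-k-1=0$; by \hyperref[thm:basicproperty]{Theorem \ref*{thm:basicproperty}}, $H_*(\CF_{\mf{G}_n})\cong\F_2$ concentrated in Maslov grading $0$. By the previous lemma, $\F_2\,x_{\mf{G}_n}$ is a direct summand of $\CF_{\mf{G}_n}$; writing $\CF_{\mf{G}_n}=\F_2\,x_{\mf{G}_n}\oplus C'$ as chain complexes and using that $\del x_{\mf{G}_n}=0$ (so $H_*(\F_2\,x_{\mf{G}_n})\cong\F_2$ in grading $0$), a grading-by-grading rank count forces $H_*(C')=0$. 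Hence the class $[x_{\mf{G}_n}]$ generates $H_*(\CF_{\mf{G}_n})\cong\F_2$.

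With this in hand the upper bound $\tau_{\mf{L}}\leq A(x_{\mf{G}_n})$ is immediate, since $x_{\mf{G}_n}$ is a cycle lying in the filtration level $\mc{F}_{\mf{L}}(A(x_{\mf{G}_n}))$ and represents the generator of $H_*(\CF_{\mf{f}(\mf{L})})$, making the inclusion-induced map non-trivial there. For the reverse inequality I would exploit the projection $\pi\colon\CF_{\mf{G}_n}\to\F_2\,x_{\mf{G}_n}$ onto the direct summand, which is a chain map inducing an isomorphism on homology. The key observation is that $\pi$ merely reads off the coefficient of the bare generator $x_{\mf{G}_n}$, because every $U$-multiple of $x_{\mf{G}_n}$ and every other generator lies in $C'$. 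Now for $a<A(x_{\mf{G}_n})$, any element of $\mc{F}_{\mf{L}}(a)$ is supported in Alexander grading $\leq a$, whereas the monomial $x_{\mf{G}_n}$ has Alexander grading $A(x_{\mf{G}_n})>a$ and therefore cannot occur; thus $\pi$ annihilates $\mc{F}_{\mf{L}}(a)$. For any cycle $z\in\mc{F}_{\mf{L}}(a)$ this yields $\pi_*[z]=0$, and since $\pi_*$ is an isomorphism the image of $[z]$ in $H_*(\CF_{\mf{f}(\mf{L})})$ vanishes. Hence the inclusion-induced map is trivial for every $a<A(x_{\mf{G}_n})$, giving $\tau_{\mf{L}}\geq A(x_{\mf{G}_n})$ and completing the argument.

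The only genuinely delicate point, and the step I would be most careful about, is the identification of the homology generator with $[x_{\mf{G}_n}]$: it rests on combining the abstract computation of $H_*(\CF_{\mf{G}_n})$ from \hyperref[thm:basicproperty]{Theorem \ref*{thm:basicproperty}} with the direct-summand structure of the previous lemma, rather than on any explicit rectangle count. Once that identification is secured, both filtration bounds reduce to bookkeeping with the Alexander grading, the essential fact being that $\pi$ respects the grading enough to kill every filtration level strictly below $A(x_{\mf{G}_n})$.
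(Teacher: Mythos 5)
Your proof is correct and follows essentially the same route as the paper's: tightness gives $\tau(K)=\tau_{\mf{L}}$, and the one-dimensional homology of $\CF_{\mf{G}_n}$ being carried by the direct summand $\F_2\,x_{\mf{G}_n}$ pins down the filtration level. The paper states this in three sentences; your write-up merely makes explicit the two inequalities (the cycle $x_{\mf{G}_n}$ for the upper bound, the projection onto the summand for the lower bound) that the paper leaves implicit.
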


\begin{proof}
  The link-grid diagram $\mf{L}$ is a tight link-grid diagram representing $K$, therefore, $\tau(K)=\tau_{\mf{L}}$, which is the smallest $a\in\frac{1}{2}\Z$, such that the map induced on homology from the inclusion $\mc{F}_{\mf{L}}(a)\hookrightarrow \CF_{\mf{G}_n}$ is non-trivial. However, the homology of $\CF_{\mf{G}_n}$ is one-dimensional, carried by the direct summand which is the $\F_2$-module generated by $x_{\mf{G}_n}$. Therefore $\tau_{\mf{L}}=A(x_{\mf{G}_n})$.
\end{proof}

\begin{figure}
\psfrag{x}{$\XX$}
\psfrag{o}{$\varnothing$}
\psfrag{o1}{$\OO_1$}
\psfrag{o2}{$\OO_2$}
\psfrag{o3}{$\OO_3$}
\psfrag{o4}{$\OO_4$}
\psfrag{o5}{$\OO_5$}
\psfrag{o6}{$\OO_6$}
\psfrag{o7}{$\OO_7$}
\begin{center}
\includegraphics[width=0.4\textwidth]{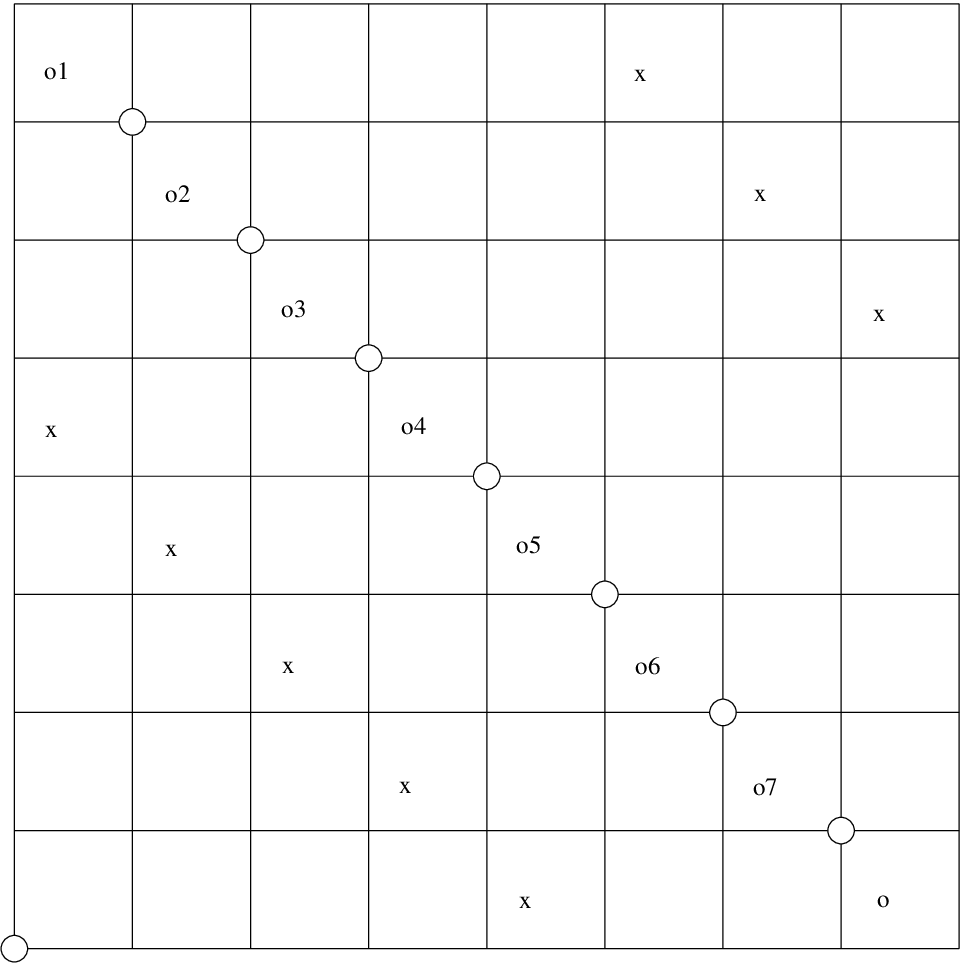}
\end{center}
\caption{The link-grid diagram $\mf{L}_{p,q}$ for $T(p,q)$, with $p=5$
  and $q=3$. The coordinates of the special generator
  $x_{\mf{G}_{p+q}}$ are shown.}\label{fig:torus}
\end{figure}

Let $T(p,q)$ denote the $(p,q)$-torus knot. We will represent $T(p,q)$
by the following index-$(p+q)$ link-grid diagram $\mf{L}_{p,q}$:
$\mf{f}(\mf{L}_{p,q})=\mf{G}_{p+q}$; squares to the bottom-right of
squares containing $X$-markings also contain $X$-markings; and the
$X$-marking in the horizontal annulus through the special $O$-marking,
lies $p$ squares to the right of the special $O$-marking. The
link-grid diagram $\mf{L}_{5,3}$ is shown in
\hyperref[fig:torus]{Figure \ref*{fig:torus}}. To draw the torus knot
$T(p,q)$ that $\mf{L}_{p,q}$ represents or to compute Alexander
gradings of specific generators, we need to identify $\mf{L}_{p,q}$
with a diagram on $[0,p+q)\times[0,p+q)$. For such identifications, we
always assume that the special $O$-marking lies in the bottom-right
square.

\begin{thm}
  There is an unknotting sequence for $T(p,q)$ with
  $\frac{(p-1)(q-1)}{2}$ crossing changes, and
  $\tau(T(p,q))=\frac{(p-1)(q-1)}{2}$. Therefore,
  $u(T(p,q))=g^*(T(p,q))=\frac{(p-1)(q-1)}{2}$.
\end{thm}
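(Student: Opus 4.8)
The plan is to prove the two claims separately and then combine them with the inequality $u(K)\geq g^*(K)\geq\left|\tau(K)\right|$ established in the introduction. First I would compute $\tau(T(p,q))$ using \hyperref[lem:taualex]{Lemma \ref*{lem:taualex}}: since $\mf{L}_{p,q}$ is a tight link-grid diagram with $\mf{f}(\mf{L}_{p,q})=\mf{G}_{p+q}$, the invariant $\tau(T(p,q))$ equals $A(x_{\mf{G}_{p+q}})$, so this reduces entirely to a direct Alexander-grading computation. Working on $[0,p+q)\times[0,p+q)$ with the special $O$-marking in the bottom-right square, I would write down the explicit coordinates of the $X$- and $O$-markings of $\mf{L}_{5,3}$ as in \hyperref[fig:torus]{Figure \ref*{fig:torus}}, the coordinates of the special generator $x_{\mf{G}_{p+q}}$ (which sit at both the top-left and bottom-right corners of the $O$-squares), and then evaluate the bilinear form $\mc{J}$ in the formula $A(x)=\mc{J}(x,X)-\mc{J}(x,O)-\frac{1}{2}\mc{J}(X,X)+\frac{1}{2}\mc{J}(O,O)-\frac{n-1}{2}$ with $n=p+q$. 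The arithmetic should collapse to $\frac{(p-1)(q-1)}{2}$.

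Next I would exhibit an explicit unknotting sequence for $T(p,q)$ using $\frac{(p-1)(q-1)}{2}$ crossing changes. Here I would work directly with the grid diagram $\mf{L}_{p,q}$, whose associated knot diagram has a standard braid-like presentation: the $X$-markings lying to the bottom-right of other $X$-markings produce the repeated crossings characteristic of a torus knot. I would identify a specific set of $\frac{(p-1)(q-1)}{2}$ crossings whose simultaneous change unknots $T(p,q)$, drawing on the well-known fact that resolving the natural positive crossings of the torus braid in this pattern yields the unknot, and check that this count is correct by a small induction on $p$ (or $q$). This shows $u(T(p,q))\leq\frac{(p-1)(q-1)}{2}$.

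Finally I would assemble the pieces. From the unknotting sequence I obtain $u(T(p,q))\leq\frac{(p-1)(q-1)}{2}$, and from the introductory chain of inequalities together with the $\tau$-computation I obtain $g^*(T(p,q))\geq\left|\tau(T(p,q))\right|=\frac{(p-1)(q-1)}{2}$. Since $u\geq g^*$ always holds, these bound $u$ and $g^*$ from both sides, forcing $u(T(p,q))=g^*(T(p,q))=\frac{(p-1)(q-1)}{2}$.

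The main obstacle I expect is the unknotting-sequence step rather than the $\tau$-computation: the latter is a bookkeeping exercise in the $\mc{J}$-form, but producing an \emph{explicit} grid-diagram-compatible sequence of exactly $\frac{(p-1)(q-1)}{2}$ crossing changes, and verifying cleanly that each change is realizable by a local move on $\mf{L}_{p,q}$ and that the final diagram represents the unknot, requires care. I would likely lean on the standard torus-knot crossing-change picture and verify the count inductively, keeping the $\tau$ lower bound as the load-bearing half so that the crossing-change construction only needs to supply a matching upper bound.
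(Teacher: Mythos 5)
Your proposal is correct and follows essentially the same route as the paper: compute $\tau(T(p,q))=A(x_{\mf{G}_{p+q}})$ via Lemma \ref{lem:taualex} by direct $\mc{J}$-form arithmetic, produce an explicit unknotting sequence of $\frac{(p-1)(q-1)}{2}$ crossing changes from the diagram induced by $\mf{L}_{p,q}$, and close the loop with $u\geq g^*\geq|\tau|$. The only detail worth pinning down is the induction for the unknotting step: the paper changes the $\frac{q(q-1)}{2}$ crossings above the principal diagonal and isotopes the result to the diagram induced by $\mf{L}_{p-q,q}$, so the induction runs along the Euclidean algorithm $(p,q)\mapsto(p-q,q)$ rather than simply on $p$ or $q$.
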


\begin{proof}
  Without loss of generality, let us assume that $p>q$. After
  identifying $\mf{L}_{p,q}$ with a picture on $[0,p+q)\times[0,p+q)$,
  let us consider the induced knot diagram for $T(p,q)$, cf.\ the
  first part of \hyperref[fig:unknotting]{Figure
    \ref*{fig:unknotting}}. In this picture, there are
  $\frac{q(q-1)}{2}$ crossings above the principal diagonal, and
  $(p-q)(q-1)+\frac{q(q-1)}{2}$ crossings below the principal
  diagonal, totalling to $p(q-1)$ crossings (thus, this is actually a
  minimal crossing diagram for $T(p,q)$).

\begin{figure}
\begin{center}
\includegraphics[width=\textwidth]{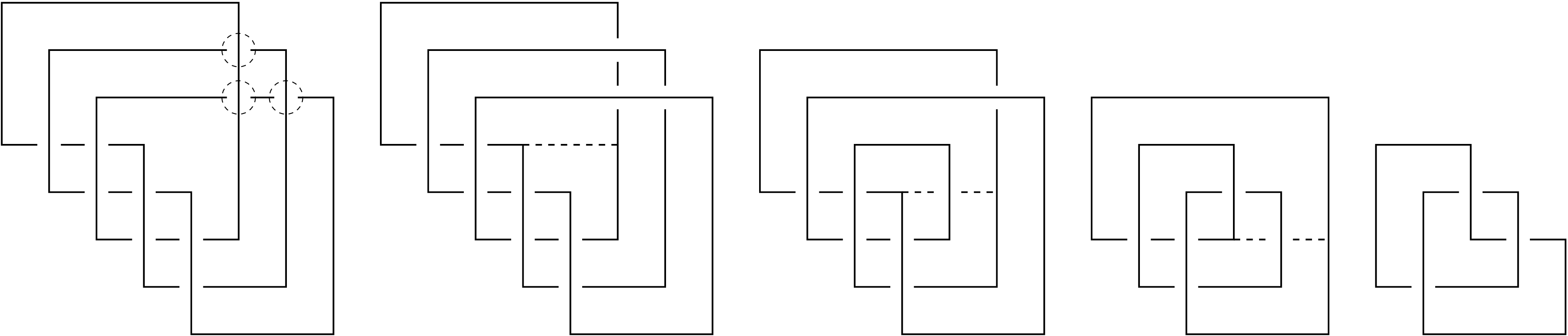}
\end{center}
\caption{Starting at the knot diagram induced by $\mf{L}_{p,q}$ with $p>q$, we do
  $\frac{q(q-1)}{2}$ crossing changes to get the knot diagram induced
  by $\mf{L}_{p-q,q}$.}\label{fig:unknotting}
\end{figure}

Change the $\frac{q(q-1)}{2}$ crossings above the principal
diagonal. \hyperref[fig:unknotting]{Figure
    \ref*{fig:unknotting}} shows how we can isotope the resulting knot
  diagram to get the diagram that would be induced by
  $\mf{L}_{p-q,q}$. However, by induction, $T(p-q,q)$ can be unknotted
  with $\frac{(p-q-1)(q-1)}{2}$ crossing changes. Therefore, $T(p,q)$
  can be unknotted with
  $\frac{(p-q-1)(q-1)}{2}+\frac{q(q-1)}{2}=\frac{(p-1)(q-1)}{2}$
  crossing changes.

  To compute $\tau(T(p,q))$, thanks to \hyperref[lem:taualex]{Lemma
    \ref*{lem:taualex}}, we only need to compute the Alexander grading
  of the special generator $x_{\mf{G}_{p+q}}$. Towards this end, let
  us number the coordinates of $x_{\mf{G}_{p+q}}$ from left to right
 as $x_1,\ldots,x_{p+q}$; let us number the $X$-markings from left to
 right as $X_1,\ldots,X_{p+q}$; and let us number the special
 $O$-marking as $O_{p+q}$. Then,
\begin{align*}
  A(x_{\mf{G}_{p+q}})&=\mc{J}(x_{\mf{G}_{p+q}},X)-\mc{J}(x_{\mf{G}_{p+q}},O)-\frac{1}{2}\mc{J}(X,X)+\frac{1}{2}\mc{J}(O,O)-\frac{p+q-1}{2}\\
  &=\sum_{1\leq i,j\leq p+q}\mc{J}(x_i,X_j)-\sum_{1\leq i,j\leq
    p+q}\mc{J}(x_i,O_j)-\sum_{1\leq i<j\leq p+q}\mc{J}(X_i,X_j)\\
  &\qquad {}+\sum_{1\leq i<j\leq p+q}\mc{J}(O_i,O_j)-\frac{p+q-1}{2}\\
  &=\sum_{j=1}^{p+q}\mc{J}(x_1,X_j)+\sum_{j=1}^p\sum_{i=2}^{p+q}\mc{J}(x_i,X_j)+\sum_{j=p+1}^{p+q}
  \sum_{i=2}^{p+q}\mc{J}(x_i,X_j)\\
  &\qquad {}-\sum_{j=1}^{p+q}\mc{J}(x_1,O_j)-\sum_{\substack{2\leq
      i\leq p+q\\1\leq j\leq
      p+q}}\mc{J}(x_i,O_j)-\sum_{\substack{1\leq i\leq p\\p+1\leq
      j\leq p+q}}\mc{J}(X_i,X_j)\\
  &\qquad {}-\sum_{\substack{1\leq i<j\leq
      p+q\\i>p\text{ or }j\leq p}}\mc{J}(X_i,X_j)+\sum_{1\leq i<j\leq
    p+q}\mc{J}(O_i,O_j)-\frac{p+q-1}{2}\\
  &=\frac{p+q}{2}+\frac{pq}{2}+\frac{pq}{2}-\frac{p+q}{2}-0-\frac{pq}{2}-0+0-\frac{p+q-1}{2}\\
  &=\frac{(p-1)(q-1)}{2}.
\end{align*}

Therefore,
$\tau(T(p,q))=A(x_{\mf{G}_{p+q}})=\frac{(p-1)(q-1)}{2}$. Combining our
results, we get $\frac{(p-1)(q-1)}{2}\geq u(T(p,q))\geq
g^*(T(p,q))\geq \tau(T(p,q))=\frac{(p-1)(q-1)}{2}$, thus completing
the proof.
\end{proof}

\section*{Acknowledgement}
The work was done when the author was supported by the Clay Research
Fellowship. He would like to thank John Baldwin, Robert Lipshitz,
Peter \Ozsvath{} and \Zoltan{} \Szabo{} for several helpful
discussions. He would also like to thank the referee for many helpful comments.

\bibliographystyle{amsalpha}
\bibliography{tau}

\end{document}